\documentclass[11pt,reqno]{amsart} 

\usepackage[letterpaper,left=1.5in,right=1.5in]{geometry}

\usepackage{amsfonts}
\usepackage{amssymb}
\usepackage[mathscr]{eucal}

\DeclareMathAlphabet{\mathpzc}{OT1}{pzc}{m}{it}

\usepackage{mathtools} 
\usepackage{yhmath} 
\usepackage{shuffle}
\usepackage{extarrows}

\usepackage{etoolbox}

\usepackage{color}
\usepackage[width=.75\textwidth]{caption}

\usepackage{subcaption}

\usepackage{tikz}
\usetikzlibrary{cd,patterns}

\usepackage{graphicx,multicol}

\usepackage[%
colorlinks,
linkcolor=blue!80!black, 
citecolor=green!80!black, 
urlcolor=magenta!80!black,
]{hyperref}

\usepackage{algorithm}
\usepackage[noEnd=True]{algpseudocodex}
\usepackage{pifont}
\newcommand{\cmark}{\text{\ding{51}}}
\newcommand{\xmark}{\text{\ding{55}}}

\usepackage[backend=biber,maxbibnames=99,sorting=nyt]{biblatex}
\newtheorem{theorem}{Theorem}
\newtheorem{corollary}{Corollary}
\newtheorem{lemma}{Lemma}
\newtheorem{proposition}{Proposition}

\theoremstyle{definition}
\newtheorem{definition}{Definition}
\newtheorem{remark}{Remark}
\newtheorem{example}{Example}

\definecolor{myred}{rgb}{.7,.1,.1}
\definecolor{myblue}{rgb}{.1,.1,.7}
\definecolor{mygreen}{rgb}{.1,.6,.1}
\definecolor{mygray}{rgb}{0.5,0.5,0.5}
\definecolor{mymauve}{rgb}{0.58,0,0.82}
\definecolor{lablue}{rgb}{0,0.9,0.93}

\usepackage[textwidth=1in,textsize=normalsize]{todonotes} 

\newcommand{\demph}[1]{\textcolor{myblue}{\emph{#1}}}

\def\qand{\quad\hbox{and}\quad}

\mathchardef\mhyphen="2D 

\def\End{\operatorname{\mathrm{End}}}

\def\id{\mathrm{id}}
\def\tw{\mathrm{tw}}

\DeclareMathOperator{\Char}{char}

\newcommand\dhxrightarrow[2][]{%
  \mathrel{\ooalign{$\xrightarrow[#1\mkern4mu]{#2\mkern4mu}$\cr%
  \hidewidth$\rightarrow\mkern4mu$}}
}

\def\port{\mathsf{port}}

\def\into{\hookrightarrow}
\def\onto{\twoheadrightarrow}

\def\park{\mathsf{park}}

\def\splus{\mathbin{\boldsymbol{+}}}
\def\stimes{\mathbin{\boldsymbol{\times}}} 
\def\sdot{\mathbin{\boldsymbol{\cdot}}} 
	
\def\scirc{\mathbin{\boldsymbol{\circ}}} 

\def\dototimes{\mathrel{\dot{\otimes}}}

\def\Spec{\mathsf{Sp}}
\def\Set{\mathsf{Set}}
\def\Vec{\mathsf{Vec}}

\def\bimon{\mathsf{Bimon}}

\def\gvec{\mathsf{gVec}}

\def\sym{\mathrm{Sym}}
\def\qsym{\mathrm{QSym}}
\def\nsym{\mathrm{NSym}}
\def\ncsym{\mathrm{NCSym}}
\def\ncqsym{\mathrm{NCQSym}}
\def\fqsym{\mathrm{FQSym}}
\def\ssym{\boldsymbol{\mathfrak{S}\mathrm{Sym}}}
\def\sqsym{\mathfrak{S}\mathrm{QSym}}
\newcommand{\rqsym}[1][r]{{}^{#1}\qsym}
\newcommand{\rncqsym}[1][r]{{}^{#1}\ncqsym}
\def\pqsym{\mathrm{PQSym}}
\def\pfsym{\mathrm{PFSym}}
\def\cqsym{\mathrm{CQSym}}

\def\K{\mathcal K}
\def\Kbar{\overline{\mathcal K}}
\def\Kvee{{\mathcal K}^\vee}
\def\Kbv{\overline{\mathcal K}^\vee}

\def\bfSigma{\mathbf{\Sigma}}
\def\bfPi{\mathbf{\Pi}}

\def\bfE{\mathbf E}
\def\bfL{\mathbf L}
\def\bij{\mathbf{bij}}
\def\cyc{\mathbf{cyc}}

\def\pf{\mathbf{PF}}

\def\rf{\mathbf{F}}
\def\prf{\vec{\mathbf{F}}}

\def\bfc{\mathbf c}
\def\bfx{\mathbf x}

\def\b{\mathbf b} 
\def\d{\mathbf d} 
\def\h{\mathbf h}
\def\p{\mathbf p}
\def\q{\mathbf q}
\def\a{\mathbf a}

\def\rfp{\mathrm p}

\def\rfE{\mathrm E}

\def\ZZ{\mathbb Z}
\def\QQ{\mathbb Q}

\newcommand{\rspec}[2][r]{\prescript{\,#1\,}{}{#2}}

\newcommand{\upspec}[2][r]{{#2}_{\underbracket[.6pt][.25ex]{\scriptstyle\hspace{.15em}#1\hspace{.05em}}}}
\newcommand{\downspec}[2][r]{{#2}_{\wideparen{\scriptstyle\hspace{.1em}#1\hspace{.1em}}}}

\newcommand{\rc}[1][r]{\prescript{#1}{}{\bfc}}

\newcommand{\rd}[1][r]{\prescript{#1}{}{\d}}

\def\cfT{\mathcal{T}}
\def\cfS{\mathcal{S}}

\begin{document}

\title[Interpolation in Species]{Interpolation in Species and a Lift of the Hopf Algebra of $r$-Quasisymmetric Functions}

\author{Aaron Lauve}
\address{Department of Mathematics and Statistics, Loyola University Chicago -- Chicago, IL, USA}
\email{alauve@luc.edu} 


\author{Anthony Lazzeroni}
\address{Department of Mathematics, North Park University -- Chicago, IL, USA}
\email{aalazzeroni@northpark.edu}

\date{March 19, 2026}

\begin{abstract}%
In this manuscript we lift the theory of $r$-quasisymmetric functions to the theory of Hopf monoids. We provide a general method of interpolating between two Hopf monoids, one being the free monoid on a positive comonoid and the other being the free commutative monoid on a positive comonoid.
\end{abstract}

\keywords{Hopf Monoids, Hopf Algebras, Species, Quasisymmetric functions}

\maketitle



\section{Introduction}

The Hopf algebras $\left\{\rqsym\right\}_{r\in\ZZ_+}$ form a one-parameter family that interpolates between the Hopf algebra $\qsym$ of quasisymmetric functions (at $r=1$) and the Hopf algebra $\sym$ of symmetric functions (at $r=\infty$). That is, there are injections of Hopf algebras
\[
\sym \into \cdots \rqsym[r+1] \into \rqsym \into \cdots \qsym
\quad(\forall r=2,3,\ldots).
\]
 for this interpolating family. For fixed $r$ and $n$, Hivert \cite{hivert2004local}  defines $\rqsym_n$ as the invariant space of an ``$r$-local'' action of the symmetric group $\mathfrak S_n$ on the polynomial ring $\QQ[\bfx_n]$ in commuting variables $\bfx_n = (x_1, x_2, \ldots, x_n)$. Specifically, for adjacent transposition $\sigma_i = (i,i{+}1)$, he defines
\begin{equation*} 
    \sigma_i\ast_r (x_i^a x_{i+1}^b):=
    \begin{cases} 
    x_{i+1}^a x_{i}^b & \text{if }a<r \text{ or } b<r\\
    x_i^a x_{i+1}^b & \text{else}
    \end{cases}.
\end{equation*}
He checks that this action satisfies the braid relations, giving a bonafide action of the symmetric group on $\QQ[\bfx_n]$. This action is not multiplicative; that is, $\sigma \ast_r (fg) \neq (\sigma \ast_rf)(\sigma \ast_rg)$ for general polynomials $f, g$. 
Nevertheless, the space is shown to be closed under ordinary polynomial product. A coproduct is defined via a standard alphabet-doubling trick; and it is shown to be compatible with polynomial product as $n$ tends to infinity. So $\rqsym = \lim_{n\to\infty} \rqsym_n$ is afforded the structure of graded connected Hopf algebra. 
A variant $\rncqsym$ built from noncommuting variables $\bfx$ was also introduced, interpolating between $\ncqsym$ and $\ncsym$.

In \cite{hivert2004local} one also finds a conjecture, later proven by Garsia and Wallach in \cite{garsia2007rqsym}, that $\rqsym_n$ is a free module over $\sym_n$ with dimension $n!$. These interpolating families have recently regained attention: \cite{lazzeroni2023powersum} introduces a powersum basis that interpolates between natural bases for $\sym$ and $\qsym$; and \cite{aliniaeifard2024generalized} poses $\rncqsym$ as a natural setting for generating functions for certain digraph colorings.

The present work was motivated by a desire to unify the construction of these two families of Hopf algebras. The formalism of Hopf monoids in the monoidal category 
$(\Spec,\sdot)$ of species seemed the right place to start our investigation for a few reasons: (i) the study of combinatorial structures under break and join operations (as promoted by Rota and collaborators \cite{joni1979coalgebras}) 
easily carries over to this category; (ii) after the work of Joyal \cite{joyal1981species}, Bergeron--Labelle--Leroux \cite{bergeron1998combinatorial} and others, there are easy ways ($+$, $\times$, $\sdot$, and $\circ$) to build  
new species of structures from old; and (iii) in \cite{aguiar2010monoidal} one finds a systematic study of species and their functors to the category $(\gvec, \otimes)$, where the Hopf algebras $\sym$, $\ncsym$ and the like make their home. As a bonus, (iv) it also happens that (co-, bi-)monoid axioms are easier to verify for species than for graded vector spaces. (See Proposition 8.35 and Remarks 8.8 and  8.36 in \cite{aguiar2010monoidal} for one perspective on this phenomenon.)

\subsubsection*{Contents}

In Section \ref{sec:prelims} we give further background on $(\Spec, \sdot)$, and assorted Hopf monoids therein. Of particular importance are the species $\bfL$ of linear orders and the exponential species $\bfE$. We recount some universal constructions defined using these Hopf monoids.
In Section \ref{sec:interpolation}, we make precise what we mean by an interpolating family $\bigl\{\rc\bigr\}_{r\in\ZZ_+}$ of Hopf monoids between two given ones, $\b$ and $\d$. Theorem \ref{th:abelianization} gives an interpolating family of Hopf monoids between $\bfL\scirc\p$ and $\bfE\scirc\q$ starting from any surjective morphism $\theta :\p \onto \q$ of positive comonoids in $(\Spec,\sdot)$.
We employ the machinery of Fock functors from $(\Spec, \sdot)$ to $(\Vec, \otimes)$ in Section \ref{sec:fock-functors}, capturing the motivating examples $\rqsym$ and $\rncqsym$ among many others. (A Hopf monoid of parking functions is introduced to reach some of these.) We close with a number of related questions in Section \ref{sec:further-questions}. 

\subsection*{Acknowledgements}
The authors thank Marcelo Aguiar and Rafael Gonz\'alez D'Le\'on for several helpful conversations throughout the writing of this manuscript.

\section{Vector Species, and Hopf Monoids Therein}
\label{sec:prelims}

We briefly catalog what is needed from the theory of species and Hopf monoids. We follow closely the notation in \cite{aguiar2010monoidal}, where further proofs and all details may be found.

\subsection{Species} 
\label{sec:species}
Let $\Set$ (respectively, $\Vec$) be the category of sets (vector spaces over $\Bbbk$), with morphisms being arbitrary set maps (vector space maps). Let $\Set^{\times}$ be the category of finite sets with bijections as morphisms. A \demph{set species} is a functor $\rfp:\Set^{\times} \to \Set$ (assigning a set $\rfp[I]$ for each $|I|<\infty$, and a bijection $\rfp[\sigma]:\rfp[I] \to \rfp[J]$ for each bijection $\sigma:I\to J$). A \demph{(vector) species} $\p:\Set^{\times} \to \Vec$ is defined similarly. 

Call $\p$ \demph{connected} if $\p[\emptyset]\cong\Bbbk$ and \demph{positive} if $\p[\emptyset]=\{0\}$.  
Call $\p$ \demph{linearized} if there is a set species $\rfp$ so that $\p[I] = \Bbbk\rfp[I]$ for all $I$ and $\p[\sigma] = \Bbbk\rfp[\sigma]$ for all $\sigma:I\to J$. 
We consider here only \demph{finite} species: $\dim\p[I]<\infty$ for all $|I|<\infty$.
We also assume $\Char \Bbbk=0$.

\begin{example}\label{ex:elem examples}
Three elementary species, before we continue our discussion.
The \demph{exponential species} $\mathbf E$, taking $\mathbf E[I]$ to be the $1$-dimensional space with basis $\bigl\{\ast_I\bigr\}$ for all finite sets $I$.
The \demph{trivial species} $\mathbf 1$, concentrated in degree 0: $\mathbf 1[I] = \Bbbk$ if $I=\emptyset$; and $\{0\}$ otherwise. 
The \demph{linear orders} $\bfL$, taking $\bfL[I]$ to be the space $\Bbbk\{l \mid l\text{ a linear (total) ordering of }I\}$.
\end{example}

Below and throughout: $S \sqcup T=I$ indicates an ordered decomposition of a set $I$ into (possibly empty) subsets; while $X\vdash I$ indicates an ordinary, unordered partition of $I$.

\begin{definition}\label{def:operations}
The \demph{sum}, 
\demph{Cauchy product}, and \demph{substitution product} of two species $\p$ and $\q$ are defined, respectively, by
\begin{gather*}
(\p\splus\q)[I] := \p[I] \oplus \q[I],\;\  
(\p\sdot\q)[I] := \bigoplus_{S\sqcup T = I} \p[S]\otimes \q[T],\\ \text{and }\;
(\p\scirc\q_+)[I] := \bigoplus_{X \vdash I} \p[X] \otimes \Bigl(\bigotimes_{A\in X} \q[A]\Bigr),
\end{gather*}
where $(\p\scirc\q_+)[\emptyset]$ is defined to be $\p[\emptyset]$.
The Cauchy product of two species $\p,\q$ is viewed as the space of pairs of elements, {\it i.e.}, $(p,q)\in \p[S]\otimes\q[T]\subseteq (\p\sdot\q)[I]$.
\end{definition}

For $n\geq 0$, let $\p_n$ be the new species concentrated in cardinality $n$,
\[
\p_n[I] = 
\begin{cases}
    \p[I] & \text{if } |I|=n, \\
    \{0\} & \text{otherwise}.
\end{cases}
\]

\begin{example}\label{ex:more examples}
The preceding constructions allow us to recast $\mathbf1$ and $\bfL$ as follows: 
\[
    \mathbf1 = \bfE_0
    \qand
    \bfL = {\bf1} \splus \bfE_1 \splus \bfE_1^{\sdot2} \splus \bfE_1^{\sdot3} \splus \cdots. 
\]
Here are some additional species that will be important in what follows.
\begin{itemize}\setlength{\itemsep}{0ex}

\item Any species is built from its restrictions via infinite sum. Or, viewed the other way around, every species $\p$ has a \demph{canonical decomposition} $\p = \sum_{n\geq0} \p_n$ \cite[Ex. 1.3.6]{bergeron1998combinatorial}. We will need three related constructions:
\[
    \p_{+}:=\sum_{1\leq n}\p_n,
    \quad
    \p_{<r}:=\sum_{0\leq n<r}\p_n,
    \qand
    \p_{\geq r}:=\sum_{r\leq n}\p_n.
\]
\item The species of \demph{set partitions} $\bfPi$ is $\bfE\circ\bfE_+$, while the species of \demph{ordered set partitions} (or, set compositions) $\bfSigma$ is $\bfL\circ\bfE_+$.
\end{itemize}
\end{example}

\subsubsection{Enumeration}
\label{sec:enum}

Following \cite{bergeron1998combinatorial}, we denote the \demph{exponential generating series} of species as $\q(x) = \sum q_n\frac{x^n}{n!}$ where $q_n=\dim \q[n]$, and we denote the \demph{ordinary generating series} of species as $\widetilde{\q}(x) = \sum \widetilde{q}_nx^n$ where $\widetilde{q}_n = \dim \q[n]_{\mathfrak S_n}$. (If $\q$ is linearized, these count labelled and unlabelled structures, respectively.) It is clear that the exponential generating series of $\bf1$, $\bfE_k$, and $\bfE$ are 
    \[
    \mathbf{1}(x) = 1, \qquad \bfE_k(x) = x^k/k!, \qquad \bfE(x) = e^x,
    \]
while their ordinary generating series are
    \[
    \widetilde{\mathbf{1}}(x) = 1, \qquad \widetilde{\bfE_k}(x) = x^k, \qquad \widetilde{\bfE}(x) = \frac{1}{1-x}.
    \]

The fundamental operations on species translates nicely to generating series:
\[
\begin{array}{rlcrl}
    (\p\splus\q)(x) =& \p(x) + \q(x) & & 
    \widetilde{(\p\splus\q)}(x)= & \widetilde{\p}
    (x) + \widetilde{\q}(x) \\
    (\p\sdot\q)(x) =& \p(x)\q(x) & & 
    \widetilde{(\p\sdot \q)}(x) =& \widetilde{\p}
    (x)\widetilde{\q}(x) \\
    (\p\scirc\q)(x) =& \p(\q(x)), & & 
\end{array}
\]
where the last row only holds if $\q[\emptyset]=\emptyset$. (See \cite[Theorem 2.15]{bergeron2008introduction} for the missing formula, written in terms of cycle index series.) 

\begin{example} Returning to the species in Example \ref{ex:more examples}, we have
    \[
     \bfE_+(x) = e^x-1, \qquad \bfL(x) = \frac{1}{1-x}, \qquad \bfPi(x) = e^{e^x-1}, \qquad
     \bfSigma(x) = \frac{1}{2-e^x},
    \]
and
    \[
    \widetilde{\bfE_+}(x) = \frac{x}{1-x}, \qquad \widetilde{\bfL}(x) = \frac{1}{1-x}, \qquad \widetilde{\bfPi}(x) = \prod_{k=1}^\infty \frac{1}{1-x^k}, \qquad
     \widetilde{\bfSigma}(x) = \frac{1-x}{1-2x}.
    \]
\end{example}

\subsection{Monoidal structures}
\label{sec:monoidal structures}

The category $\Spec$ is a braided monoidal category under Cauchy product, with $\mathbf1$ as the unit object (and trivial brading, $u\otimes v \mapsto v\otimes u$). 
 
Let $(\mathsf{C},\bullet)$ be a monoidal category with unit element $\mathrm{U}$. Recall a \demph{monoid} in $\mathsf{C}$ is an object $\mathrm B$ equipped with morphisms $\mu:\mathrm B\bullet \mathrm B \to \mathrm B$ and $\iota:\mathrm{U}\to \mathrm B$ satisfying certain associativity and unital axioms.
Similarly, $\mathrm B$ is a \demph{comonoid} in $\mathsf{C}$ if it's equipped with morphisms $\Delta:\mathrm B \to \mathrm B \bullet \mathrm B$ and $\epsilon:\mathrm B \to \mathrm{U}$ satisfying certain dual axioms.
Finally, $\mathrm B$ is a \demph{bimonoid} in (a braided monoidal category) $\mathsf{C}$ if these maps are compatible (briefly, $\Delta$ and $\epsilon$ are monoid-morphisms); and this bimonoid $\mathrm{B}$ is a \demph{Hopf monoid} in $\mathsf{C}$ if the identity morphism $\id$ on $\mathrm B$ is invertible in the convolution monoid, $(\End(\mathrm{B}), \boldsymbol{\ast})$. (Here, the convolution $f\mathrel{\boldsymbol{\ast}}g$ of two morphisms is defined as the composition $\mu \circ (f\bullet g)\circ \Delta$.) If $\id^{-1}$ exists, we call it the \demph{antipode}---ordinarily denoted ``$\mathrm{s}$.''

Below, we deal exclusively with connected species, so most  axioms involving the (co)units are trivial. We display the remaining axioms in our context.

\medskip\noindent
\emph{Associativity axiom for monoids.} For all decompositions $S\sqcup T$ of a finite set $I$, there exist morphisms $\mu_{S,T}:\b[S]\otimes \b[T] \to \b[I]$ making the following diagram commute:
\begin{equation}
\label{eq:associative}
\begin{tikzcd}
    \b[R]\otimes\b[S]\otimes\b[T] \arrow[rr, "\id_R\otimes \mu_{S,T}"]\arrow[d, "\mu_{R,S}\otimes \id_T"'] & & \b[R]\otimes\b[S\sqcup T] \arrow[d, "\mu_{R,S\sqcup T}"]\\
    \b[R\sqcup S]\otimes\b[T] \arrow[rr, "\mu_{R\sqcup S,T}"'] & & \b[R\sqcup S\sqcup T]
\end{tikzcd}
\end{equation}

\medskip\noindent
\emph{Coassociativity axiom for comonoids.} For all decompositions $S\sqcup T$ of a finite set $I$, there exist morphisms $\Delta_{I,J}: \b[I] \to \b[S]\otimes \b[T]$ making the following diagram commute:

\begin{equation}
\label{eq:coassociative}
\begin{tikzcd}
    \b[R\sqcup S\sqcup T] \arrow[rr, "\Delta_{R,S\sqcup T}"]\arrow[d, "\Delta_{R\sqcup S,T}"'] & & \b[R]\otimes\b[S\sqcup T] \arrow[d, "\id_S\otimes \Delta_{S,T}"]\\
    \b[R\sqcup S]\otimes\b[T] \arrow[rr, "\Delta_{R,S}\otimes \id_T"'] & & \b[R]\otimes\b[S]\otimes\b[T]
\end{tikzcd}
\end{equation}

\medskip\noindent
\emph{Compatibility axiom for binomoids.} 
Let $\tw:V\otimes W \to W\otimes V$ be the standard braiding on vector spaces over $\Bbbk$. Given two decompositions $S\sqcup T$ and $S'\sqcup T'$ of a finite set $I$, put 
\begin{gather}
\label{eq:ABCD}
	A=S\cap S', \ \ B = S\cap T', \ \ C = T\cap S', \qand D = T\cap T'. 
\end{gather}
Then the following diagram commutes:
\begin{equation}
\label{eq:compatible-delta}
\begin{tikzcd}
    \b[A]\otimes\b[B]\otimes\b[C]\otimes\b[D] \arrow[rr, "\id_A\otimes \tw \otimes \id_D"] & & \b[A]\otimes\b[C]\otimes\b[B]\otimes\b[D] \arrow[d, "\mu_{A,C}\otimes \mu_{B,D}"]\\
    \b[S]\otimes\b[T] \arrow[r, "\mu_{S,T}"'] \arrow[u, "\Delta_{A,B}\otimes\Delta_{C,D}"] & \b[I] \arrow[r,"\Delta_{S',T'}"'] & \b[S']\otimes\b[T'] 
\end{tikzcd}
\quad\mbox{\ }   
\end{equation}

\smallskip\noindent
\emph{(Left-)Convolution-invertibility of $\id$.} Note that the identity morphism in $\End(\b)$ under convolution is $\iota\circ\epsilon$. So, for all finite sets $I$, we require morphisms $\mathrm{s}_I:\b[I] \to \b[I]$ making the following diagram (and its ``right invertible'' counterpart) commute:
\begin{equation}
\label{eq:antipode}
\begin{tikzcd}[column sep=tiny]
    {\displaystyle\bigoplus_{S\sqcup T}}\,\b[S]\!\otimes\!\b[T] \arrow[rr, "\mathrm{s}_S\otimes \id_T"] & & {\displaystyle\bigoplus_{S\sqcup T}}\, \b[S]\!\otimes\!\b[T] \arrow[d, start anchor={[yshift=1.5ex]south}, yshift=.25ex, "\oplus\mu_{S,T}"]\\
    \b[I] \arrow[r, "\epsilon_I"'] \arrow[u, end anchor={[yshift=1.5ex]south}, yshift=.25ex, "\oplus\Delta_{S,T}"'] & \mathbf1[I] \arrow[r,"\iota_I"'] & \b[I]\,.
\end{tikzcd}
\end{equation}

\begin{remark}
\label{rem:hopf-emptyset}
    (a). Note \eqref{eq:antipode} differs from \eqref{eq:associative}, \eqref{eq:coassociative}, and \eqref{eq:compatible-delta} in that we cannot compare maps one decomposition of $I$ at a time. Things aren't as bad as they seem: after \cite[Prop. 8.10]{aguiar2010monoidal}, $\b$ is a Hopf monoid if and only if $\b[\emptyset]$ is a Hopf algebra. This means it suffices to check \eqref{eq:antipode} in the case $I=\emptyset$. 

 (b).
If $\b$ is connected, then $\b[\emptyset]=\Bbbk$ is trivially a Hopf algebra. (So we make no further mention of antipodes in what follows.)
\end{remark}

\subsubsection{Additional constructions}
\label{sec:constructions}
We also need the following for species $\b$ and $\d$:
\begin{itemize}\setlength{\itemsep}{0ex}
    \item If $\b$ and $\d$ are Hopf, then so is $\b\sdot\d$. 
    \item If $\mathcal I\subseteq \b$ is a bi-ideal of a connected bimonoid $\b$, then $\b/\mathcal I$ is a Hopf monoid.
    \item if $\b\xhookrightarrow{\pi}\d$ is a morphism of Hopf monoids, then so is $\d^* \dhxrightarrow{\pi^*}\b^*$. (In particular, the dual species $\b^*:=\bigoplus_{I}\b[I]^*$ is Hopf whenever $\b$ is.)
\end{itemize}

\subsection{The functors \texorpdfstring{$\cfT(\mhyphen)$}{T(-)} and \texorpdfstring{$\cfS(\mhyphen)$}{S(-)}}
\label{sec:T and S}

We consider the free Hopf monoid $\cfT(\p) = \bfL\scirc\p$, and free commutative Hopf monoid $\cfS(\p)=\bfE\scirc\p$, built on a positive comonoid $\p$. 
As free objects, they satisfy certain universal properties. We give the first here.

\begin{theorem}[{\cite[Thm. 11.10]{aguiar2010monoidal}}]
\label{th:T(p)}
	Let $\h$ be a Hopf monoid, $\p$ a positive comonoid, and $\zeta:\p\to\h_+$ a morphism of positive comonoids. Let $\eta(\p)$ be the natural inclusion of $\p$ into $\cfT(\p)_+$. There exists a unique morphism of Hopf monoids $\hat{\zeta}:\cfT(\p)\to\h$ such that
	\[
	\begin{tikzcd}
		\cfT(\p)_+ \arrow[rr, "\hat{\zeta}_{+}"]  &  & \h_+ \\
		& \p \arrow[ul, "\eta(\p)"] \arrow[ur, "\zeta"']& 
	\end{tikzcd}
	\]
	is a commutative diagram of positive comonoids. 
\end{theorem}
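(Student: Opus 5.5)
We construct $\hat\zeta$ directly from the description $\cfT(\p)[I]=\bigoplus_{(S_1,\dots,S_k)\models I}\p[S_1]\otimes\cdots\otimes\p[S_k]$, with $(S_1,\dots,S_k)$ running over set compositions of $I$ into nonempty blocks. This is just $\bfL\scirc\p$ with the linear order on blocks recorded.

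\emph{Uniqueness.} Any monoid morphism $\hat\zeta$ with $\hat\zeta\circ\eta(\p)=\zeta$ is forced on the summand indexed by $(S_1,\dots,S_k)$:
\[
\hat\zeta_I(x_1\otimes\cdots\otimes x_k)=\mu_{S_1,\dots,S_k}\bigl(\zeta_{S_1}(x_1)\otimes\cdots\otimes\zeta_{S_k}(x_k)\bigr).
\]
Here $\mu_{S_1,\dots,S_k}$ is the iterated product of $\h$, which is well defined by \eqref{eq:associative}. On $\cfT(\p)[\emptyset]=\Bbbk$ we set $\hat\zeta$ to be the unit map $\iota$.

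\emph{Existence.} We take the formula above as the definition of $\hat\zeta$. Naturality in bijections $I\to J$ holds because $\zeta$ and $\mu$ are natural. The map is multiplicative because the product on $\cfT(\p)$ is concatenation of compositions, and concatenation is sent to the iterated product by associativity of $\h$. Units are preserved by construction. It remains to show that $\hat\zeta$ is a comonoid morphism. If so, it is a bimonoid morphism between connected bimonoids, hence a morphism of Hopf monoids, since antipodes are automatically preserved (Remark~\ref{rem:hopf-emptyset}). Commutativity of the triangle is the case $k=1$.

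\emph{Main step: compatibility with coproducts.} The coproduct of $\cfT(\p)$ is the unique monoid morphism extending $\Delta_\p$ on generators. Concretely, $\Delta_{S,T}$ on $x_1\otimes\cdots\otimes x_k$ splits each $S_i=(S_i\cap S)\sqcup(S_i\cap T)$, applies $\Delta^{\p}_{S_i\cap S,\,S_i\cap T}$, and concatenates the left parts and the right parts, discarding empty blocks. Empty blocks appear only through the counit, because $\p$ is positive. We need
\[
\Delta^{\h}_{S,T}\circ\hat\zeta_I=(\hat\zeta_S\otimes\hat\zeta_T)\circ\Delta^{\cfT(\p)}_{S,T}.
\]
We prove this by induction on $k$. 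The case $k=1$ is exactly the statement that $\zeta$ is a morphism of comonoids, extended by the counit to the cases where $S_1\cap S$ or $S_1\cap T$ is empty. For the inductive step, write $x_1\otimes\cdots\otimes x_k=\mu(x_1\otimes y)$ and apply the compatibility axiom \eqref{eq:compatible-delta} in $\h$ with decompositions $S_1\sqcup(S_2\cup\cdots\cup S_k)$ and $S\sqcup T$. This expresses $\Delta^{\h}\mu$ as $(\mu\otimes\mu)(\id\otimes\tw\otimes\id)(\Delta\otimes\Delta)$. The same identity holds in $\cfT(\p)$ by definition of its coproduct, so the inductive hypothesis on $x_1$ and on $y$ closes the step. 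The only real care needed is bookkeeping of the intersections $A,B,C,D$ from \eqref{eq:ABCD} and the degenerate empty pieces, which connectedness of $\h$ handles.
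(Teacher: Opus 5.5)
The paper does not prove this statement; it quotes it from \cite[Thm.~11.10]{aguiar2010monoidal}. Your direct argument is correct, and it is the usual argument made explicit. The compatibility axiom \eqref{eq:compatible-delta} says exactly that $\Delta^{\h}$ is a monoid morphism $\h\to\h\sdot\h$. So $\Delta^{\h}\circ\hat\zeta$ and $(\hat\zeta\sdot\hat\zeta)\circ\Delta^{\cfT(\p)}$ are two monoid morphisms $\cfT(\p)\to\h\sdot\h$ that agree on the generators $\p$; this agreement is your base case. By freeness of $\cfT(\p)$ as a monoid they agree everywhere. Your induction on $k$ is this uniqueness argument unwound by hand. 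It costs some bookkeeping, but it shows clearly where the degenerate decompositions enter.

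Two small corrections.

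\begin{enumerate}
\item Remark~\ref{rem:hopf-emptyset} only guarantees that the antipodes exist. That $\hat\zeta$ commutes with them follows instead from the general fact that a bimonoid morphism between Hopf monoids preserves antipodes: $\hat\zeta\circ\mathrm{s}$ and $\mathrm{s}\circ\hat\zeta$ are both convolution inverses of $\hat\zeta$, so they are equal.

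\item You are right to use connectedness of $\h$, and it is genuinely needed, not merely convenient. Your base case uses $\Delta^{\h}_{\emptyset,I}(\zeta(x))=1\otimes\zeta(x)$. This does not follow from $\zeta$ being a morphism of positive comonoids when $\h[\emptyset]\neq\Bbbk$.

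For example, let $\h=\mathbf{H}\sdot\bfE$, where $\mathbf{H}$ is concentrated in degree $0$ with value $\Bbbk[\ZZ/2]=\Bbbk\{1,g\}$. Then $\zeta(\ast_I)=g\otimes\ast_I$ is a morphism of positive comonoids $\bfE_+\to\h_+$. But $\Delta^{\h}_{\emptyset,I}(\zeta(\ast_I))=g\otimes\zeta(\ast_I)$, so the forced monoid map $\hat\zeta$ is not a comonoid map.

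So the statement must be read under the paper's standing convention that all Hopf monoids are connected, and that is exactly the setting in which your proof works.
\end{enumerate}
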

To describe the Hopf monoid $\cfT(\p)$, we need some notation.
From Definition \ref{def:operations}, we might write an element of $\cfT(\p)$ as $l\otimes \bigotimes_{A\in X}p_{A}$, for a given set partition $X\vdash I$ and given elements $l\in \bfL[X]$ and $\{p_A \in \p[A] \mid A\in X\}$. But if $l$ is a linear order on $X$, we use the simpler notation ``$(p_A)_{A\in F}$'' where $F\vDash I$ is a \emph{set composition} of $I$. We write simple tensors in $\cfS(\p)$ as $\{p_{A}\}_{A\in X}$.

\begin{example}\label{ex:T(p)} 
    (\emph{The free Hopf monoid $\cfT(\p)$ on $\p$.}) Let $S\sqcup T = I$ be a decomposition, with $F\vDash S$ and $G\vDash T$. The product maps are simple enough:
\begin{align*}
    \mu_{S,T}:\cfT(\p)[S]&\otimes \cfT(\p)[T] \to \cfT(\p)[I] 
    & {(p_A)}_{A\in F} &\otimes {(p_{B})}_{B\in G} \mapsto {(p_C)}_{C\in F|G},
\end{align*}
where $F|G$ is the concatenation of two compositions.
For details on the coproduct, see ``dequasi-shuffling'' in \cite[Sec. 11.2.4]{aguiar2010monoidal}.
Briefly, we first remark that any composition $H\vDash I$ induces a pair of set compositions $F\vDash S$ and $G\vDash T$, with any given block $C\in H$ decomposing as $A\sqcup B$ for $A\in F$, $B\in G$. We use a modified Sweedler notation for $\Delta^{\p}$: given $p_C\in\p[C]$ with $A\sqcup B = C$, we write
$\Delta_{A,B}^{\p}(p_C) = \sum_{(p_C)} p_A\otimes p_B$. Extending this to the multi-tensor product $(p_c)_{C\in H}$, we define the coproduct in $\cfT(\p)$ as follows. 
\begin{align*}
\Delta_{S,T} : \cfT(\p)[I] &\to \cfT(\p)[S] \otimes \cfT(\p)[T] &
    {(p_C)}_{C\in H} &\mapsto \!\!\sum_{((p_C)_{C\in H})}\!\! {(p_{A})}_{A\in F} \otimes {(p_B)}_{B\in G}.
\end{align*}
Edge cases (either $A=S\cap C$ or $B=T\cap C$ is empty) are handled in the obvious way.\footnote{Since $\p$ is positive, we have no definition for the expression, {\it e.g.}, $\Delta^{\p}_{\emptyset,C}$, appearing implicitly in the definition of $\Delta_{S,T}$. In such cases, omit the corresponding $\Delta^{\p}$ and give $p_C$, unaltered, to $F$ or $G$ as appropriate.}
\end{example}

The Hopf monoid $\cfS(\p)$ is defined similarly (replace all set compositions appearing above with set partitions).

\begin{example} The Hopf monoids $\bfSigma$ and $\bfPi$ of set compositions and set partitions, respectively, are relevant examples. We have $\bfSigma = \cfT(\bfE_+)$ and $\bfPi = \cfS(\bfE_+)$. For both, we take the simplest (linearized) comonoid structure for $\bfE_+$:
\[
    \bfE_+[I] \xrightarrow{\Delta_{S,T}} \bfE_+[S]\otimes \bfE_+[T]
    \quad\hbox{with}\quad
    \ast_I \mapsto \ast_S \otimes \ast_T
\]
for all nonempty decompositions $I=S\sqcup T$. Continuing the example for $\bfPi=\cfS(\bfE_+)$, the coproduct applied to a set partition $\phi$ may be written
\[
    \phi \mapsto \phi\vert_{S} \otimes \phi\vert_{T},
\]
where, {\it e.g.}, $\phi\vert_{S}$ is the partition of $S$ built by intersecting each block of $\phi$ with $S$ (discarding empty intersections). The product on set partitions is union of blocks: 
\[
    \phi \otimes \psi \mapsto \phi \sqcup \psi.
\]
\end{example}

\begin{remark}\label{rem:duality}
Let $\p$ be a positive monoid, then $\p^*$ is naturally a positive comonoid, and we may consider the Hopf monoid $\cfT(\p^*)$ as in Section \ref{sec:T and S}. 
We define the functor $\cfT^{\vee}$ via 
\[
	\cfT^{\vee}(\p) = (\cfT(\p^*))^*.
\]
Likewise for $\cfS^{\vee}$. The Hopf monoids $\cfT^{\vee}(\p)$ and $\cfS^{\vee}(\p)$ satisfy their own universal properties,\footnote{See \cite[\S\S 11.4 \& 11.5]{aguiar2010monoidal} for details.}
but this fact won't be needed in what follows. 
We mention them only because the Hopf algebras $\qsym$ and $\ncqsym$ from the introduction (revisited in Section \ref{sec:fock-functors}) are more naturally described via fock-functor images of $\cfT^{\vee}(\bfE_+)$.%
\end{remark}

\section{Interpolation in $(\Spec,\sdot)$ }
\label{sec:interpolation}

Fix two species $\b$ and $\d$ and a totally-ordered set $\mathcal N$. We say the (one-parameter) family of species $\{\rc\}_{r \in \mathcal N}$ \demph{interpolates between $\b$ and $\d$} if there exist surjective morphisms making the following diagram commute
\begin{gather}\label{eq:ports-from-b}
\begin{tikzcd}[ampersand replacement=\&]
    \b \arrow[rr, pos=.6, "\port_{r}"'] \arrow[bend left=23, pos=.4, "\port_s"]{rrrr}{} 
    \&  \& \rc \arrow[rr,"\port^r_{s}"'] \arrow[bend left=23,shorten >=1.5mm, pos=.6, "\port^r"]{rrrr}{} \& \& \rspec[s]{\bfc} \arrow[rr, pos=.35, "\port^{s}"'] \&  \& \d \ .
\end{tikzcd}
\end{gather}    
And additionally, $\port^r_t = {\port}^s_t \circ {\port}^r_s$ ($\forall\,r\leq s<t$).

\begin{remark}
\label{rem:interpolation-variants}
We focus our discussion, and all proofs that follow, on the case where $\mathcal N = \ZZ_{\geq1}$, with $\b = \rc[1]$ and $\d = \rc[\infty] = \lim_{r\to\infty} \rc$. (The latter notion is defined carefully below.) 
But in examples, it may be convenient to allow our indexing to proceed in the opposite direction (say, $\b = \rc[\infty]$ and $\rc[1] = \d$). 

More importantly, we extend the definition of interpolating family (as in the introduction) to allow for a chain of injections in place of surjections. One can pass between the two notions by taking duals of the associated species, {\it cf.} Section \ref{sec:constructions}.
\end{remark}

Recall from \cite[Def.~1.21]{bergeron1998combinatorial} that a sequence $(\rspec{\p})_{r\geq1}$ of species \demph{converges to} $\q$ up to isomorphism (written $\lim_{r\to\infty} \rspec{\p}\cong\q$) if for $n\geq0$, there exists an $r_0$ such that $\rspec{\p}_{\leq n} \cong \q_{\leq n}$ for all $r> r_0$.
A \demph{decreasing filtration} of a species $\b$, denoted $\b=\bigcup_r\upspec{\b}$, is a sequence of species satisfying $\b = \upspec[1]{\b} \supseteq \upspec[2]{\b} \supseteq \cdots \supseteq \mathbf1$.
An \demph{increasing filtration} of a species $\d$, denoted $\d=\bigcup_r\downspec{\d}$, has $\mathbf1 \subseteq \downspec{\d}$, $\downspec[r-1]{\d}\subseteq \downspec{\d}$ and $\lim_{r\to\infty}\downspec{\d}\cong\d$. 

\begin{definition}\label{def:interpolating-family}
Suppose $\pi: \b \onto \d$ is a morphism of Hopf monoids and there are filtrations $\bigcup_r\upspec{\b}$ and $\bigcup_r\downspec{\d}$. We say $\rc := \upspec{\b}\sdot\downspec{\d}$ $(r\geq 1)$ is an \demph{interpolating family of Hopf monoids} if each $\rc$ is Hopf and the commuting diagram \eqref{eq:ports-from-b} comprises Hopf maps that factor $\pi$.
\end{definition}

\subsection{An axiomatic approach}
\label{sec:Hopf-filtrations}

Let $\b=\bigcup_r\downspec{\b}$ and $\d=\bigcup_r\downspec{\d}$ be increasing filtrations (as defined above) of sub-comonoids and Hopf submonoids, respectively. Put $\downspec{\b} = \mathbf1 \splus \downspec{\b}^+$ and write $\langle \downspec{\b}^+\rangle$ for the ideal in $\b$ generated by $\downspec{\b}^+$. Note that $\langle \downspec{\b}^+\rangle \subseteq \langle \downspec[s]{\b}^+\rangle$ for all $r<s$.

We now give axioms guaranteeing the existence of an interpolating family of Hopf monoids.

\begin{enumerate}
\item \label{ax:comm} $\pi:\b\to\d$ is a surjective Hopf monoid map with $\d$ commutative; 
\item \label{ax:filt} $\b$ and $\d$ have filtrations as above, and $\pi$ preserves these, {\it i.e.}, $\pi(\downspec{\b}) \subseteq \downspec{\d}$ for all $r$;
\item \label{ax:comp} species complements $\b=\upspec{\b} \splus \langle \downspec{\b}^+\rangle$ are defined so that $\upspec{\b} \supseteq \upspec[s]{\b}\supseteq \mathbf1$ for all $r<s$; 
\item \label{ax:stab} the antipode stabilizes the comonoid $\downspec{\b}$ for all $r$.\footnote{As we have taken our Hopf monoids to be connected, this condition may be dropped. Extending the present work to the non-connected case, this axiom is most easily achieved by requiring that the $\b$-filtration (like the $\d$-filtration) is one of Hopf monoids.}
\end{enumerate}

Under the hypotheses on $\b$ and $\downspec{\d}$, the Cauchy product $\b\sdot\downspec{\d}$ is a Hopf monoid, with ``coordinatewise'' product and coproduct by \cite[Props. 3.50, 3.75, 3.76]{aguiar2010monoidal}. 
The following Lemma realizes $\rc$ as a quotient of $\b\sdot\downspec{\d}$ by a Hopf ideal.

\begin{lemma}
Given $\b$ and $\d$ satisfying Axioms \ref{ax:comm}--\ref{ax:stab}. Let $\rspec{\mathcal I}$ be the ideal in $\b$ generated by the elements
\begin{gather*}
\bigl\{ b\otimes 1_{\d} - 1_\b\otimes \pi(b) \ \mid \ b\in\downspec{\b}^+ \bigr\}.
\end{gather*}
Then $(\b\sdot\downspec{\d})/\rspec{\mathcal I}$ is isomorphic to $\upspec{\b}\sdot \downspec{\d}$ as species. Additionally, $\rspec{\mathcal I}$ is a Hopf ideal.
\end{lemma}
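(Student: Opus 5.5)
The plan has two parts: show that $\rspec{\mathcal I}$ is a Hopf ideal, and compute the quotient via a normal-form argument. Throughout, $\rspec{\mathcal I}$ is read as the two-sided ideal of $\b\sdot\downspec{\d}$ generated by the elements $g_b := b\otimes 1_{\d} - 1_\b\otimes\pi(b)$ with $b\in\downspec{\b}^+$. Since $\pi(b)\in\downspec{\d}$ by Axiom \ref{ax:filt}, these elements do lie in $\b\sdot\downspec{\d}$. By Remark \ref{rem:hopf-emptyset} and Section \ref{sec:constructions}, a bi-ideal of a connected bimonoid suffices for the Hopf claim, so antipodes need not be checked (Axiom \ref{ax:stab} is automatic here).

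\emph{Coideal.} Because the coproduct is a monoid morphism, it is enough to check the generators. Take $b\in\downspec{\b}^+[I]$ and a decomposition $S\sqcup T=I$. Since $\downspec{\b}$ is a subcomonoid, Sweedler notation gives $\Delta_{S,T}(b)=\sum b_{(1)}\otimes b_{(2)}$ with both factors in $\downspec{\b}$. Since $\pi$ is a comonoid map, $\Delta_{S,T}(\pi(b))=\sum \pi(b_{(1)})\otimes\pi(b_{(2)})$. With the coordinatewise coproduct on $\b\sdot\downspec{\d}$ this yields
\[
\Delta(g_b)=\sum (b_{(1)}\otimes 1)(b_{(2)}\otimes 1)\text{-terms} - (1\otimes\pi b_{(1)})(1\otimes \pi b_{(2)})\text{-terms},
\]
which is to be rewritten using the telescoping identity $x\otimes y - x'\otimes y' = (x-x')\otimes y + x'\otimes(y-y')$. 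Here $x=b_{(1)}\otimes 1$, $x'=1\otimes\pi(b_{(1)})$, and similarly for $y,y'$. Each summand then contains a factor $g_{b_{(i)}}$; when one factor is empty-degree, the term is a unit and the difference is zero. Hence $\Delta(\rspec{\mathcal I})\subseteq \rspec{\mathcal I}\sdot(\b\sdot\downspec{\d})+(\b\sdot\downspec{\d})\sdot\rspec{\mathcal I}$. The counit kills $g_b$ by positivity.

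\emph{Species isomorphism.} The natural map to use is
\[
\phi:\upspec{\b}\sdot\downspec{\d}\into \b\sdot\downspec{\d}\onto (\b\sdot\downspec{\d})/\rspec{\mathcal I}.
\]
\emph{Surjectivity.} By Axiom \ref{ax:comp}, any $x\in\b$ splits as $u + \sum \beta\, b\,\beta'$ with $u\in\upspec{\b}$ and $b\in\downspec{\b}^+$. Modulo $\rspec{\mathcal I}$ we have $b\otimes 1\equiv 1\otimes\pi(b)$. Commutativity of $\d$ (Axiom \ref{ax:comm}) lets us move $\pi(b)$ to the right: the product is coordinatewise, so $(\beta\otimes 1)(1\otimes\pi b)(\beta'\otimes 1)=\beta\beta'\otimes \pi(b)$ up to the braiding in $\sdot$. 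Inducting on degree then reduces everything to $\upspec{\b}\sdot\downspec{\d}$.

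\emph{Injectivity.} This is the step I expect to be the main obstacle. My first attempt is to construct a left inverse $\psi:\b\sdot\downspec{\d}\to \upspec{\b}\sdot\downspec{\d}$ that vanishes on $\rspec{\mathcal I}$. On $u\otimes d$ with $u\in\upspec{\b}$, take $\psi$ to be the identity. On a generator-form element $\beta b\beta'\otimes d$, set $\psi=\psi(\beta\beta'\otimes \pi(b)d)$, defined recursively by degree. The difficulty is showing that $\psi$ is well defined: an element of $\langle\downspec{\b}^+\rangle$ can be written in the form $\sum\beta b\beta'$ in many ways. If this fails, I would fall back on a dimension count in each degree, comparing $\dim$ of the quotient with $\dim(\upspec{\b}\sdot\downspec{\d})[I]$. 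That would use the splitting $\b\cong \upspec{\b}\splus\langle\downspec{\b}^+\rangle$ and an argument that $\rspec{\mathcal I}$ is exactly the span of the elements $x\otimes d - (\text{rewritten form})$.
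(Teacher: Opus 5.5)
Your coideal computation is the paper's ``adding zero'' trick, and your surjectivity argument is the paper's remark that $\b\sdot\downspec{\d}\equiv\upspec{\b}\sdot\downspec{\d}$ modulo $\rspec{\mathcal I}$. The genuine gap is injectivity, which is the real content of the first claim, and you leave it unproved. Your $\psi$ is defined by rewriting a presentation $\sum\beta b\beta'$ of an element of $\langle\downspec{\b}^+\rangle$. Nothing in your argument shows that the result is independent of the presentation, or of the order of the recursive rewriting. Your dimension-count fallback needs the same fact, namely that $\rspec{\mathcal I}$ is spanned by ``element minus its rewritten form,'' so it is circular. Relatedly, you invoke commutativity of $\d$ in the wrong place. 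Moving $1\otimes\pi(b)$ past $\beta'\otimes 1$ only uses that the two Cauchy factors commute. Commutativity of $\d$ is what injectivity needs: otherwise $1\otimes\bigl(\pi(b_1)\pi(b_2)-\pi(b_2)\pi(b_1)\bigr)\in\rspec{\mathcal I}$ for $b_1,b_2\in\downspec{\b}^+$, as in the remark following the lemma.

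The missing idea is to build a map out of the quotient from formulas that are well defined by construction, and only then check that it kills $\rspec{\mathcal I}$. The coideal property you already proved does most of the work. The paper itself only asserts ``a straightforward degree argument''; here is one way to supply it.

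Define
\[
\rho(\beta\otimes d)=\epsilon(d)\,[\beta]\in\b/\langle\downspec{\b}^+\rangle\cong\upspec{\b},
\qquad
\tau(\beta\otimes d)=\pi(\beta)\,d\in\d .
\]
The map $\rho$ kills each $(\alpha\otimes e)\,g_b\,(\alpha'\otimes e')$, because $\alpha b\alpha'\in\langle\downspec{\b}^+\rangle$ and $\pi(b)$ has positive degree. The map $\tau$ kills $\rspec{\mathcal I}$, because it is a monoid morphism vanishing on every $g_b$; this is exactly where commutativity of $\d$ enters. Since $\rspec{\mathcal I}$ is a coideal, $\Psi:=\sum_{S\sqcup T=I}(\rho\otimes\tau)\circ\Delta_{S,T}$ vanishes on $\rspec{\mathcal I}$.

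For $u\otimes d\in\upspec{\b}[P]\otimes\downspec{\d}[Q]$ one computes $\Psi(u\otimes d)=\sum_{S\subseteq P}[u_S]\otimes\pi(u_{P\setminus S})\,d$, using Sweedler notation for $\Delta^{\b}_{S,P\setminus S}(u)$. The $S=P$ term is $[u]\otimes d$. Now let $z=\sum_P z_P\neq0$ lie in $\upspec{\b}\sdot\downspec{\d}$, and choose $P_0$ of maximal size with $z_{P_0}\neq0$. The $\upspec{\b}[P_0]\otimes\d[I\setminus P_0]$-component of $\Psi(z)$ is then the injective image of $z_{P_0}$. Hence $\Psi(z)\neq0$, so $z\notin\rspec{\mathcal I}$.

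Alternatively, your $\psi$ can be rescued in the case the paper actually uses, $\b=\cfT(\p)$ with $\downspec{\b}=\cfT(\p_{<r})$. Define it directly on the decomposition $\cfT(\p)[I]=\bigoplus_F\bigotimes_{A\in F}\p[A]$ by sending $(p_A)_{A\in F}\otimes d$ to $(p_A)_{A\in F,\,|A|\geq r}\otimes\bigl(\prod_{A\in F,\,|A|<r}\pi(p_A)\bigr)\,d$. Freeness of $\cfT(\p)$ removes the well-definedness problem, and commutativity of $\d$ shows this map kills $\rspec{\mathcal I}$.
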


\begin{proof}
Clearly, $\b\sdot \downspec{\d} \equiv \upspec{\b}\sdot\downspec{\d} \mod \rspec{\mathcal I}$. Towards the first claim, a straightforward degree argument shows that a basis for $\upspec{\b}\sdot\downspec{\d}$ remains linearly independent in the quotient.

\bigskip
Towards the second claim, let $x=b\otimes 1_{\d} - 1_\b\otimes \pi(b)$ be a generator of $\rspec{\mathcal I}$ (so $b\in\downspec{\b}^+$). We need to show that the antipode $S=S^{\b}\otimes S^{\d}$ satisfies $S(x)\in \rspec{\mathcal I}$ and the coproduct $\Delta_{S,T}$ satisfies
\begin{gather} \tag{$\ast$} \label{eq:in-ideal} 
\Delta_{S,T}(x) \ \in \ (\b\sdot\downspec{\d})[S]\dototimes \rspec{\mathcal I}[T] \ + \ \rspec{\mathcal I}[S]\dototimes(\b\sdot\downspec{\d})[T].
\end{gather}
Axiom \ref{ax:stab} gets us most of the way through the first point. (It's a simple matter to show that $S^{\b}(b)\in \downspec{\b}$ implies $S(x) \in \rspec{\mathcal I}$.) For the second, a standard ``adding zero'' trick and appeal to Axiom \ref{ax:filt} will suffice. 
(We use the modified Sweedler notation $\Delta^{\b}_{S,T}(b) = \sum_{(b)} b_S \otimes b_T$ for the coproduct in $\b$ with $S,T\neq\emptyset$.)
 \begin{align*}
\Delta_{S,T}(x) &= \sum_{(b)} b_S\otimes 1 \dototimes b_T\otimes 1 - \sum_{(\pi(b))} 1\otimes \pi(b)_S \dototimes 1\otimes \pi(b)_T \\
=& \sum_{(b)} b_S\otimes 1 \dototimes b_T\otimes 1 
\mp b_S\otimes 1 \dototimes 1\otimes \pi(b_T) - 
1\otimes \pi(b_S) \dototimes 1\otimes \pi(b_T), \\
\intertext{since $\pi$ is a comonoid map. Regrouping,}
=& \sum_{(b)} b_S\otimes 1 \dototimes \Bigl[b_T\otimes 1 - 1\otimes \pi(b_T)\Bigr] + 
\sum_{(b)} \Bigl[b_S\otimes 1 - 1\otimes \pi(b_S)\Bigr]\dototimes 1\otimes \pi(b_T).
\end{align*}
Since $\downspec{\b}$ is a comonoid, each $b_S,b_T$ appearing above belongs to $\downspec{\b}^+$ whenever $S,T$ are nonempty. So \eqref{eq:in-ideal} is satisfied. Analysis of the edge cases ({\it e.g.}, when $S\sqcup T = \emptyset\sqcup I$) proceeds similarly. Conclude $\rspec{\mathcal I}$ is a Hopf ideal.
\end{proof}

\begin{remark}
The commutative assumption on $\d$ is essential, since $(b\otimes 1)\cdot (1\otimes d) = b\otimes d = (1\otimes d)\cdot(b\otimes 1)$. Indeed, assume $b\in\downspec{\b}^+$. Using the first equality, we get $b\otimes d \equiv \pi(b)\cdot d \mod \rspec{\mathcal I}$; using the second, we get $b\otimes d \equiv d\cdot\pi(b) \mod \rspec{\mathcal I}$.
\end{remark}

\begin{definition} \label{def:rc}
Let \demph{$\rc$} be the species quotient $(\b\sdot\downspec{\d})/\rspec{\mathcal I}$. Since this is a quotient of a Hopf monoid by a Hopf ideal, it is also a Hopf monoid.
\end{definition}

We now turn to the maps $\port^\bullet_\bullet$, whose definitions are more intricate.

\begin{lemma}
Let $\varsigma^r:\rc \to \b\sdot\downspec{\d}$ be any (linear) section of the quotient $\vartheta_r: \b\sdot\downspec{\d} \to (\b\sdot\downspec{\d})/\rspec{\mathcal I}$. Letting $\iota$ be the natural inclusion $\b\sdot\downspec{\d} \into \b\sdot\downspec[s]{\d}$ of Hopf monoids for all $r<s$, put $\pi^r_s := \vartheta_s\circ\iota\circ\varsigma^r$. Then $\pi^r_s$ is a morphism of Hopf monoids from $\rc$ to $\rc[s]$.
\end{lemma}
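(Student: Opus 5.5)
The strategy is to show that $\pi^r_s$ does not depend on the choice of section $\varsigma^r$, and is in fact the map induced on quotients by the Hopf monoid inclusion $\iota$. Once that is established, the Hopf property is inherited from $\iota$ and from the quotient maps, with no further computation.

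\emph{Step 1: $\iota$ maps ideal into ideal.} We first check that $\iota(\rspec{\mathcal I}) \subseteq \rspec[s]{\mathcal I}$. The ideal $\rspec{\mathcal I}$ is generated by the elements $b\otimes 1_\d - 1_\b\otimes\pi(b)$ with $b\in\downspec{\b}^+$. The filtration is increasing, so $\downspec{\b}^+\subseteq\downspec[s]{\b}^+$. By Axiom~\ref{ax:filt}, $\pi(b)\in\downspec{\d}\subseteq\downspec[s]{\d}$. Hence $\iota$ sends each generator of $\rspec{\mathcal I}$ to a generator of $\rspec[s]{\mathcal I}$. Since $\iota$ is a morphism of monoids, it sends the two-sided ideal generated by these elements into the ideal generated by their images, so $\iota(\rspec{\mathcal I})\subseteq\rspec[s]{\mathcal I}$. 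Equivalently, $\vartheta_s\circ\iota$ vanishes on $\rspec{\mathcal I}=\ker\vartheta_r$.

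\emph{Step 2: independence of the section.} By Step~1, $\vartheta_s\circ\iota$ factors uniquely through $\vartheta_r$: there is a unique linear map $\bar\iota:\rc\to\rc[s]$ with $\bar\iota\circ\vartheta_r=\vartheta_s\circ\iota$. For any section $\varsigma^r$ we have $\vartheta_r\circ\varsigma^r=\id$, so $\varsigma^r(c)-c'\in\ker\vartheta_r$ for every lift $c'$ of $c$. Therefore
\[
\pi^r_s=\vartheta_s\circ\iota\circ\varsigma^r=\bar\iota\circ\vartheta_r\circ\varsigma^r=\bar\iota ,
\]
and $\pi^r_s$ is independent of the choice of section. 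Naturality in $I$ (compatibility with relabelings) also follows, because $\bar\iota$ is a morphism of species even though $\varsigma^r$ need not be.

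\emph{Step 3: the Hopf property.} Now $\vartheta_r$ is a surjective morphism of Hopf monoids, since it is the quotient by a Hopf ideal (previous Lemma and Definition~\ref{def:rc}), and $\vartheta_s\circ\iota$ is a composite of Hopf morphisms. We verify multiplicativity componentwise. For $x\in\rc[S]$ and $y\in\rc[T]$, choose lifts $x',y'$. Then
\[
\bar\iota(\mu(x\otimes y))=\vartheta_s\iota(\mu(x'\otimes y'))=\mu\bigl(\vartheta_s\iota x'\otimes\vartheta_s\iota y'\bigr)=\mu(\bar\iota x\otimes\bar\iota y).
\]
Comultiplicativity follows the same way, using surjectivity of $\vartheta_r\otimes\vartheta_r$ to lift coproducts. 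The unit, counit and antipode axioms are immediate, and the antipode may be ignored anyway by Remark~\ref{rem:hopf-emptyset}.

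The only substantive point is Step~1. It is the place where Axiom~\ref{ax:filt} and the monotonicity of both filtrations are used, together with the fact that $\iota$ is a monoid map, so that generating the ideal commutes with applying $\iota$. Everything else is the formal statement that a morphism compatible with kernels descends to a well-defined morphism of the quotients.
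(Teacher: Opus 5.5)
Your proof is correct and follows the same route as the paper. Both arguments hinge on $\iota(\rspec{\mathcal I})\subseteq\rspec[s]{\mathcal I}$; the paper then checks products and coproducts directly on representatives modulo $\rspec[s]{\mathcal I}$, which is the same verification you package as the induced map $\bar\iota$ on quotients. Your formulation adds something the paper leaves implicit: it shows that $\pi^r_s=\bar\iota$ does not depend on the possibly non-equivariant section, and hence is a genuine morphism of species.
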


\begin{proof}
The reader can check that $\iota(\rspec{\mathcal I}) \subseteq \rspec[s]{\mathcal I}$ for $r<s$, from which the result will follow.

\smallskip
\noindent\emph{$\pi^r_s$ respects products.}~Given simple tensors $b\otimes d, b'\otimes d' \in \rc$, let the section $\varsigma^r$ yield elements $b\otimes d + f$ and $b'\otimes d + f'$, respectively ($f,f'\in\rspec{\mathcal I}$). The reader can check that $\pi^r_s\circ\mu_{S,T}^{\rc}$ and $\mu^{\rc[s]}_{S,T}\circ(\pi^r_s)^{\otimes2}$ both satisfy
\[
b\otimes d \dototimes b'\otimes d' \ \longmapsto \ (bb')\otimes(dd') \mod \rspec[s]{\mathcal I}
\]
(indicated products taken in $\b$ and $\d$, respectively).

\medskip
\noindent\emph{$\pi^r_s$ respects coproducts.}~Given a simple tensor $b\otimes d \in \upspec{\b}[P]\otimes \downspec{\d}[Q] \subseteq \rc{}[I]$, with section $b\otimes d + f$, write
\[
    \Delta_{S,T}^{\b\sdot\downspec{\d}}(b\otimes d + f) = \sum_{(b),(d)} b_{S\cap P}\otimes d_{S\cap Q} \dototimes b_{T\cap P}\otimes d_{T\cap Q} + \sum_{(f)} f_S\dototimes f_T.
\]
The reader can check that $(\pi^r_s)^{\otimes2}\circ\Delta_{S,T}^{\rc}$ and $\Delta_{S,T}^{\rc[s]}\circ\pi^r_s$ both satisfy
\[
    b\otimes d \ \longmapsto \ \sum_{(b),(d)} b_{S\cap P}\otimes d_{S\cap Q} \dototimes b_{T\cap P}\otimes d_{T\cap Q} \mod \rspec[s]{\mathcal I}\sdot(\b\sdot\downspec[s]{\d}) \splus (\b\sdot\downspec[s]{\d}) \sdot \rspec[s]{\mathcal I}.
\]
We omit the trivially verified (co)unit axioms, so the proof is complete.
\end{proof}

Note that $\b\cong \b\sdot\downspec[1]{\d} \cong \upspec[1]{\b}\sdot\downspec[1]{\d}$ (since $\downspec[1]{\b}^+ = \{0\}$ and $\downspec[1]{\d}=\mathbf1$). So the first map of interest, $\port_1:\b\to \rc[1]$, is just this isomorphism. On the other extreme, note that $\pi^r_{s}$ acts as inclusion on the subspace $\mathbf1\sdot\downspec{\d}$ of $\rc$. We identify these subspaces (and inclusions) with the filtration $\d=\bigcup_r\downspec{\d}$ in what follows.

\begin{definition} \label{def:port}
Define the carrying maps \demph{$\port^\bullet_\bullet$} as follows.
Put $\port_r = \pi_{r}^{r-1}\circ\cdots\circ\pi^2_3\circ\pi^1_2$, $\port^r_s = \pi_{s}^{s-1}\circ\cdots\circ\pi^{r+1}_{r+2}\circ\pi^{r}_{r+1}$, and  $\port^s = \cdots\circ \pi^{s+1}_{s+2}\circ\pi^{s}_{s+1}$. (In the proof of the coming theorem, we argue that: (i) $\port^r_s$ is actually just $\pi^r_s$; and (ii) $\port^s$ is locally finite, and hence well-defined.) These are morphisms of Hopf monoids, as (finite) compositions of such. 
\end{definition}

\begin{theorem} \label{th:axiomatic}
Suppose $\pi:\b\onto\d$ is a surjective morphism of connected Hopf monoids, with $\d$ commutative. Then $\{\rc\}_{r\geq1}$ and the carrying maps $\port^\bullet_\bullet$ from Definitions \ref{def:rc} and \ref{def:port} comprise an interpolating family of Hopf monoids between $\b$ and $\d$. 
\end{theorem}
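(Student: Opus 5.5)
The plan is to verify, one by one, the requirements of Definition~\ref{def:interpolating-family}. The ingredients are the two lemmas above (each $\rc$ is Hopf, and each $\pi^r_s$ is a Hopf morphism) together with the parenthetical claims promised in Definition~\ref{def:port}. Axioms \ref{ax:comm}--\ref{ax:stab} are in force throughout, as in the lemmas.

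First I will show that each $\rc$ has the required shape. The first lemma gives $\rc\cong\upspec{\b}\sdot\downspec{\d}$ as species, and $\rc$ is Hopf by Definition~\ref{def:rc}. I will also record the endpoints. At $r=1$ we have $\downspec[1]{\b}^+=0$ and $\downspec[1]{\d}=\mathbf1$, so $\rc[1]\cong\b$ and $\port_1$ is this isomorphism. At the other end, fix a degree $n$. Because $\lim_r\downspec{\d}\cong\d$, there is an $r_0$ with $(\downspec{\d})_{\le n}=\d_{\le n}$ for $r>r_0$. I will further need $(\upspec{\b})_{\le n}=\mathbf1$ for $r$ large, i.e.\ $\downspec{\b}$ exhausts $\b$ in each degree (I would assume this as part of the filtration hypothesis). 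Then $(\rc)_{\le n}\cong\d_{\le n}$, so $\lim_r\rc\cong\d$.

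Next come the carrying maps. For (i), I will show $\pi^s_t\circ\pi^r_s=\pi^r_t$, so that $\port^r_s$ collapses to $\pi^r_s$. Write $\varsigma^s$ for the section of $\vartheta_s$ used to define $\pi^s_t$. For $x\in\rc$, the element $\varsigma^s\vartheta_s\iota\varsigma^r(x)$ differs from $\iota\varsigma^r(x)$ by an element of $\rspec[s]{\mathcal I}$. Applying $\iota$ and $\vartheta_t$, and using $\iota(\rspec[s]{\mathcal I})\subseteq\rspec[t]{\mathcal I}$ (established in the second lemma), the discrepancy dies. In particular $\pi^r_s$ is independent of the choice of section. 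This gives the cocycle identity $\port^r_t=\port^s_t\circ\port^r_s$ and the commutativity of \eqref{eq:ports-from-b}.

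Surjectivity of $\pi^r_s$ is checked on simple tensors. Take $b\otimes d\in\upspec[s]{\b}\sdot\downspec[s]{\d}$ with $d\in\downspec[s]{\d}$. By Axiom~\ref{ax:comm} $\pi$ is surjective, and together with Axiom~\ref{ax:filt} I will try to lift $d$ modulo $\rspec[s]{\mathcal I}$ to something of the form $b'\otimes d'$ with $b'\in\downspec[s]{\b}$ and $d'\in\downspec{\d}$. This needs $\pi(\downspec[s]{\b})$ to span $\downspec[s]{\d}$ modulo products of lower filtration pieces. I expect this step to be delicate: as stated, the axioms only give $\subseteq$, so I may need to strengthen Axiom~\ref{ax:filt} to $\pi(\downspec{\b})+\downspec[r-1]{\d}\cdot\downspec[r-1]{\d}\supseteq\downspec{\d}$. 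Otherwise I would fall back on the Remark~\ref{rem:interpolation-variants} convention, where it suffices that the maps factor $\pi$.

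For (ii), local finiteness of $\port^s$, I will restrict to degree $\le n$. For $t>r_0(n)$ both $\downspec[t]{\b}$ and $\downspec[t]{\d}$ are stationary in degrees $\le n$, so $\pi^t_{t+1}$ is the identity there and the infinite composite stabilizes. This yields a well-defined Hopf morphism $\rc[s]\to\d$. Factoring $\pi$ follows because each $\pi^r_s$ restricts to $\pi$ on the $\b$-factor modulo the ideals, via the relation $b\otimes1\equiv1\otimes\pi(b)$. The main obstacle, as said, is surjectivity in the presence of only a containment $\pi(\downspec{\b})\subseteq\downspec{\d}$.
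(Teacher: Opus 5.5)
Where you commit to an argument, you follow the paper's route. You get the collapse $\pi^s_t\circ\pi^r_s=\pi^r_t$ (hence $\port^r_s=\pi^r_s$, independent of the section) from $\iota(\rspec[s]{\mathcal I})\subseteq\rspec[t]{\mathcal I}$. You get well-definedness of the infinite composites from exhaustiveness of the filtrations. You factor $\pi$ through $b\otimes1\equiv1\otimes\pi(b)$.

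The paper argues local finiteness element by element: if $b\in\downspec[s]{\b}$ then $\port^r_s(b\otimes d)=1\otimes\pi(b)d$, and every later factor is an inclusion. It proves $\hat\pi=\pi$ by writing $b=b_1\cdots b_k$ with the $b_j$ in successive filtration layers and moving them across one at a time. Your degreewise version is slightly cleaner. In degrees $\le n$ and for $t$ large, the collapse gives $\port^s\circ\port_s=\pi^1_t$. Every positive $b$ of degree $\le n$ already lies in $\downspec[t]{\b}^+$, so $\pi^1_t(b)=\vartheta_t(b\otimes1)=\vartheta_t(1\otimes\pi(b))$ with no factorization needed. Say it that way, since ``each $\pi^r_s$ restricts to $\pi$ on the $\b$-factor'' is false for $b\in\upspec[s]{\b}$.

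Also, exhaustion of $\b$ by $\downspec{\b}$ is not an extra assumption. The condition $\lim_r\downspec{\b}\cong\b$ is part of the definition of an increasing filtration, and since $\downspec{\b}[I]\subseteq\b[I]$ is finite-dimensional, isomorphism forces equality degreewise.

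The obstacle you flag is real, and the paper's proof is silent on it. It checks well-definedness, $\hat\pi=\pi$ and $\port^r_s=\pi^r_s$, but never surjectivity. Moreover, surjectivity cannot be derived from Axioms \ref{ax:comm}--\ref{ax:stab} as stated. Take $\p=\bfE_+$, $\b=\cfT(\p)=\bfSigma$, $\d=\cfS(\p)=\bfPi$, $\downspec{\b}=\cfT(\p_{<r})$, $\downspec[1]{\d}=\mathbf1$, and $\downspec{\d}=\cfS(\p_{<r+1})$ for $r\geq2$; all four axioms hold. On $I=\{a,b\}$, $\rc[2][I]$ is $3$-dimensional, with basis the classes of $\alpha\otimes1$, $1\otimes\{\{a,b\}\}$ and $1\otimes\{\{a\},\{b\}\}$, where $\alpha$ is the one-block composition of $I$. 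But $\pi^1_2$ sends $\alpha$ to the class of $\alpha\otimes1$ and both two-block compositions to the class of $1\otimes\{\{a\},\{b\}\}$. So it misses $1\otimes\{\{a,b\}\}$.

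Your strengthened Axiom \ref{ax:filt} does suffice; note that $\downspec[r-1]{\d}\cdot\downspec[r-1]{\d}$ is just $\downspec[r-1]{\d}$, a submonoid. The image of $\pi^{r-1}_r$ is the submonoid generated by $\vartheta_r(\b\otimes1)$ and $\vartheta_r(1\otimes\downspec[r-1]{\d})$, and under your hypothesis it contains $\vartheta_r(1\otimes\downspec{\d})$. The simpler hypothesis $\pi(\downspec{\b})=\downspec{\d}$ also works, and it holds in Theorem \ref{th:abelianization} because $\theta$ is surjective and degree-preserving.

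Your fallback cites the wrong place. Remark \ref{rem:interpolation-variants} trades surjections for injections by duality. The reading in which ``Hopf maps that factor $\pi$'' is all that is required is the literal wording of Definition \ref{def:interpolating-family}. Under that reading, both your argument and the paper's are complete.
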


\begin{proof}
To begin, put $\hat\pi = \cdots \pi^{i}_{i+1} \circ\cdots\circ \pi^2_3\circ\pi^1_2$. Then
\[
    \hat\pi = \port^r\circ\port_r = \port^s\circ\port^r_s\circ\port_r
\]
for all $r<s$. (And, by construction, $\port^r_t = \port^s_t\circ\port^r_s$.) Leaving aside the question of whether or not $\hat\pi$ and $\port^r$ are well-defined, it would seem the diagram in \eqref{eq:ports-from-b} commutes, with all maps being Hopf morphisms factoring $\hat\pi$. Three straightforward checks complete the proof.

\smallskip
\noindent\emph{Claim: $\hat\pi$ and $\port^r$ are well-defined.}~Given the simple tensor $b\otimes d$ in $\rc$, note that $b\in\downspec[s]{\b}$ for some $s>r$. Then $\port^r = \port^s\circ\port^r_s$, with $\port^r_s$ mapping $b\otimes d$ to $1\otimes d'd$ for some $d'\in\downspec[s]{\d}$. (We argue below that $d'=\pi(b)$.) Now all subsequent factors $\pi^i_{i+1}$ of $\port^s$ act as inclusions $\downspec[i]{\d}\into \downspec[i+1]{\d}$. (Again, we identify the subspace $\mathbf1\sdot\downspec[i]{\d}\subseteq\rc[i]$ with $\downspec[i]{\d}$ for all $i$.) So $\port^r$ and $\hat\pi$ are locally finite compositions of Hopf morphisms, hence themselves well-defined Hopf morphisms.

\medskip
\noindent\emph{Claim: $\hat\pi = \pi$.}~Given $1\neq b\in \b$, identify $b$ with $b\otimes 1 \in \rc[1]$. We must show that $\hat\pi(b\otimes 1) = 1\otimes \pi(b)$. As each $\pi^i_{i+1}$ is linear, it suffices to consider $b$ as a product $b_{1}b_{2}\cdots b_{k}$ with each $b_{j} \in \downspec[i_j]{\b}^+ \setminus \downspec[i_j-1]{\b}^+$. For convenience, we take $i_1 < i_2 < \ldots < i_k$. (The fully general case of any choices $i_j\in\ZZ_{>0}$, including repetitions, presents no added technical difficulties, only notational ones.) Then 
\begin{align*}
    \hat\pi(b\otimes 1) &= \port^{i_1}\circ\pi^{i_1-1}_{i_1}\circ\port_{i_1-1}(b\otimes 1) = \port^{i_1}\circ\pi^{i_1-1}_{i_1}(b\otimes 1) \\
    &= \port^{i_1}\bigl(b_{2}\cdots b_{k}\otimes \pi(b_{1})\bigr) \\
    &= \port^{i_2}\circ\pi^{i_2-1}_{i_2}\circ\port_{i_2-1}^{i_1}\bigl(b_{2}\cdots b_{k}\otimes \pi(b_{1})\bigr) \\
    &= \port^{i_2}\bigl(b_{3}\cdots b_{k}\otimes \pi(b_{2})\pi(b_{1})\bigr),
\end{align*}
and so on. Since $\d$ is commutative, the result of $\hat\pi$ on $b\otimes1$ is $1\otimes\prod_{j}\pi(b_{j}) = 1\otimes\pi(b)$, as needed. 

\medskip
\noindent\emph{Claim: $\port^r_s = \pi^r_s$.}~Appealing to the inclusion $\iota(\rspec{\mathcal I}) \subseteq \rspec[s]{\mathcal I}$ for $r<s$, 
the reader can check that $(\vartheta_{t'}\circ \iota\circ\varsigma^{t})\circ(\vartheta_{t}\circ\varsigma^{r})=(\vartheta_{t'}\circ \iota\circ\varsigma^{r})$ for all $r<t < t'\leq s$. The claim readily follows.
\end{proof}

\begin{remark}
An easy way to find pairs $\b,\d$ satisfying the axioms is to start from the functors $\cfT(\mhyphen)$ and $\cfS(\mhyphen)$ on positive comonoids $\p,\q$. We turn to that discussion next, and find a nice extension of a result appearing in \cite{aguiar2010monoidal}.
\end{remark}

\subsection{Generalizing and interpolating the abelianization map}
\label{sec:main-results}

Recall the universal property of the $\cfT(\p)$ functor from Section \ref{sec:T and S}. In case $\h = \cfS(\p)$, $\hat\zeta$ is the \emph{abelianization map} of \cite[\S11.6.2]{aguiar2010monoidal}. In the further case that $\p = \bfE_+$, the Hopf monoids $\cfT(\p)$ and $\cfS(\p)$ are $\bfSigma$ and $\bfPi$, respectively. These will produce in the coming section all the Hopf algebras mentioned in the introduction. 

Here, starting from any map $\theta:\p \onto \q$ of positive comonoids, we use the canonical decompositions for $\p$ and $\q$ to provide increasing filtrations on the Hopf monoids $\b=\cfT({\p})$ and $\d=\cfS({\q})$. The role of $\rc$ from Definition \ref{def:rc} is played here by $\cfT(\p_{\geq r})\sdot\cfS(\q_{<r})$.\footnote{An alternate proof is available. From $\theta$ we get a (positive) comonoid map $\p \to \rc_+$. Then Theorem \ref{th:T(p)} can produce all of the Hopf morphisms necessary to make \eqref{eq:ports-from-b} commute. But this approach is unavailable for some of the examples in Section \ref{sec:fock-functors}, so we follow the road map of Theorem \ref{th:axiomatic} instead.}%

\begin{theorem}\label{th:abelianization}
    Let $\theta:\p\onto\q$ be a surjective map of positive comonoids, and $\pi_\theta:\cfT(\p)\onto \cfS(\q)$ its natural extension (a Hopf map). Then $\bigl\{\cfT({\p}_{\geq r})\sdot\cfS({\q}_{<r})\bigr\}_{r\geq1}$ is a one-parameter family of Hopf monoids interpolating $\pi_\theta$.
\end{theorem}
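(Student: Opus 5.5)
The plan is to reduce Theorem \ref{th:abelianization} to Theorem \ref{th:axiomatic}. Take $\b=\cfT(\p)$ and $\d=\cfS(\q)$, with $\pi=\pi_\theta$, and check Axioms \ref{ax:comm}--\ref{ax:stab} for suitable filtrations. Then identify the resulting $\rc=(\b\sdot\downspec{\d})/\rspec{\mathcal I}$ with $\cfT(\p_{\geq r})\sdot\cfS(\q_{<r})$.

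\textbf{Step 1: the map and the filtrations.} First, $\pi_\theta$ is the morphism given by Theorem \ref{th:T(p)}, applied to $\zeta=\eta(\q)\circ\theta:\p\to\cfS(\q)_+$. It is surjective because $\theta$ is surjective and $\cfS(\q)$ is generated as a monoid by $\q$. Since $\cfS(\q)$ is commutative, Axiom \ref{ax:comm} holds.

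The filtrations come from the canonical decompositions. Put $\downspec{\b}:=\mathbf1\splus\p_{<r}$, viewed inside $\cfT(\p)$ as one-block compositions, and $\downspec{\d}:=\cfS(\q_{<r})$. We must check the following.
\begin{itemize}
\item $\p_{<r}$ is a subcomonoid of $\p$. This holds because $\Delta_{S,T}$ only lowers cardinality.
\item $\cfS(\q_{<r})$ is a Hopf submonoid of $\cfS(\q)$. Blocks of size $<r$ restrict to blocks of size $<r$.
\item $\pi_\theta(\downspec{\b})\subseteq\downspec{\d}$. This holds because $\theta$ preserves cardinality, which gives Axiom \ref{ax:filt}.
\item $\lim_r\cfS(\q_{<r})\cong\cfS(\q)$. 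This holds since $\cfS(\q_{<r})_{<r}=\cfS(\q)_{<r}$.
\end{itemize}

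\textbf{Step 2: the complement.} The ideal $\langle\p_{<r}\rangle$ of $\cfT(\p)$ is spanned by the tensors $(p_A)_{A\in F}$ having at least one block with $|A|<r$. Its complement is spanned by the compositions all of whose blocks have size $\geq r$, i.e.\ by $\upspec{\b}:=\cfT(\p_{\geq r})$. These complements are nested, since $\cfT(\p_{\geq s})\subseteq\cfT(\p_{\geq r})$ for $r<s$, so Axiom \ref{ax:comp} holds. Axiom \ref{ax:stab} is vacuous in the connected setting.

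\textbf{Step 3: identification and conclusion.} The Lemma preceding Definition \ref{def:rc} then gives $\rc\cong\cfT(\p_{\geq r})\sdot\cfS(\q_{<r})$ as species. The Hopf structure is transported from the quotient.

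I would make this structure explicit. A product in $\rc$ rewrites every block of size $<r$ on the $\b$ side as $\theta$ of that block, moved into the commutative factor. The coproduct dequasi-shuffles and then applies the same rewriting to any block that restriction shrinks below $r$.

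Theorem \ref{th:axiomatic} then gives the Hopf morphisms $\port^\bullet_\bullet$ factoring $\pi_\theta$, and it remains to check surjectivity.
\begin{itemize}
\item $\port^r_s$ maps blocks of size $\geq s$ to themselves. It sends each block of size in $[r,s)$ to its $\theta$-image in $\cfS(\q_{<s})$, and this is surjective because $\theta$ is.
\item The endpoints agree: $\rc[1]=\cfT(\p)$, and $\rc[\infty]=\cfS(\q)$ in the limit.
\end{itemize}

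\textbf{Expected main obstacle.} The delicate point is Step 2 combined with the basis argument in the Lemma. One must check that $\cfT(\p_{\geq r})\sdot\cfS(\q_{<r})$ maps isomorphically onto the quotient, i.e.\ that no nontrivial relations arise among normal forms. I would prove this by exhibiting a rewriting map $\cfT(\p)\sdot\cfS(\q_{<r})\to\cfT(\p_{\geq r})\sdot\cfS(\q_{<r})$. This map deletes the small blocks of a composition and multiplies their $\theta$-images into the $\cfS$ factor. Commutativity of $\cfS(\q_{<r})$ makes it well defined, it kills $\rspec{\mathcal I}$, and it restricts to the identity on normal forms.
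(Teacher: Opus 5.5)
Your route is the paper's: take $\b=\cfT(\p)$, $\d=\cfS(\q)$, $\pi=\pi_\theta$, verify Axioms~\ref{ax:comm}--\ref{ax:stab}, identify the complement of $\langle\downspec{\b}^+\rangle$ as $\cfT(\p_{\geq r})$, and invoke Theorem~\ref{th:axiomatic}. The one step that fails as written is your choice $\downspec{\b}:=\mathbf1\splus\p_{<r}$ (one-block compositions). The axioms require an \emph{increasing filtration} of $\b$, and by definition this includes $\lim_{r\to\infty}\downspec{\b}\cong\b$. You checked this limit for $\d$ but not for $\b$, and for $\b$ it is false: $\lim_{r}(\mathbf1\splus\p_{<r})=\mathbf1\splus\p$, which misses every composition with two or more blocks. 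This matters because the proof of Theorem~\ref{th:axiomatic} uses exhaustiveness: the well-definedness of $\port^r$ starts from ``$b\in\downspec[s]{\b}$ for some $s>r$''. An $r$-large composition with several blocks lies in no $\mathbf1\splus\p_{<s}$, so you cannot cite that theorem verbatim. Relatedly, Axiom~\ref{ax:stab} is not ``vacuous'' for your choice. The antipode of $\cfT(\p)$ sends a one-block element to a sum involving multi-block compositions, so $\mathbf1\splus\p_{<r}$ is not antipode-stable. The axiom is merely dispensable by connectedness.

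The repair is the paper's choice $\downspec{\b}:=\cfT(\p_{<r})$. This is a Hopf submonoid, so it is a subcomonoid stable under the antipode. It exhausts $\cfT(\p)$, and $\pi_\theta$ maps it into $\cfS(\q_{<r})$. Nothing downstream changes. Since $\cfT(\p_{<r})_+$ is generated as a monoid by $\p_{<r}$, the ideal $\langle\downspec{\b}^+\rangle$ is the one you described. The ideal $\rspec{\mathcal I}$ is also the same, because both $b\mapsto b\otimes1$ and $b\mapsto1\otimes\pi(b)$ are monoid maps. Writing $x_j:=b_j\otimes1-1\otimes\pi(b_j)$, we have
\[
b_1b_2\otimes1-1\otimes\pi(b_1b_2)=x_1\,(b_2\otimes1)+\bigl(1\otimes\pi(b_1)\bigr)\,x_2 .
\]
So your Steps 2 and 3 go through unchanged.

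Two of your additions go beyond the paper's terse proof and are worth keeping. The first is the explicit check that each $\port^r_s$ is surjective. The second is the rewriting map $\cfT(\p)\sdot\cfS(\q_{<r})\to\cfT(\p_{\geq r})\sdot\cfS(\q_{<r})$. It is multiplicative by commutativity of $\cfS(\q_{<r})$, it sends both $b\otimes1$ and $1\otimes\theta(b)$ to $1\otimes\theta(b)$ for $b\in\p_{<r}$, and it fixes normal forms. This makes rigorous the ``degree argument'' in the paper's Lemma. One small correction: products in $\rc$ need no rewriting, since a concatenation of $r$-large compositions is $r$-large; only the coproduct does.
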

\begin{proof} From the universal property of $\cfT(\mhyphen)$, $\pi_\theta$ is defined on simple tensors by
\[
	(p_A)_{A\in F} \mapsto \{\theta(p_A)\}_{A\in F}.
\]
Because we use the canonical decomposition of $\p$ and $\q$ to define our filtrations, we get immediately that $\downspec{\b}=\cfT(\p_{<r})$ is a Hopf submonoid of $\b$; and likewise for $\d$. Moreover, $\pi(\downspec{\b}) \subseteq \downspec{\d}$.

Since $\b$ is free as a monoid, $\langle\downspec{\b}^+\rangle$ is spanned by elements $(p_A)_{A\in F}$ with at least one $|A|<r$. So $\upspec{\b}$ is spanned by similar expressions with the stipulation that all $|A|\geq r$.
 Clearly $\upspec{\b} \supseteq \upspec[s]{\b}$ when $r<s$.
 
 Thus Axioms \ref{ax:comm}--\ref{ax:comp} are satisfied. Since $\b,\d$ are also connected, Axiom \ref{ax:stab} holds as well. This completes the proof.
\end{proof}

For concreteness, we describe the map $\port^r_s$ in this context. 
Consider a simple tensor $(p_A)_{A\in F}\otimes \{q_B\}_{B\in Y} \in \rc$, which satisfies $|A|\geq r$ and $|B|<r$ ($\forall A,\forall B$). Recall $F$ is a linear order on a set partition $X$. Write $X = X'\sqcup X''$, with $A\in X' \iff |A|\geq s$. Let $F'$ be the ordering of $X'$ induced by $X$. Then
\begin{gather}\label{eq:TS-port}
    (p_A)_{A\in F}\otimes\{q_B\}_{B\in Y} \ \xrightarrow{\,\port^r_s\,} \
    (p_A)_{A\in F'}\otimes\bigl\{\{q_B\}_{B\in Y}\cup \{\theta(p_A)\}_{A\in X''}\bigr\}.
\end{gather}
We call elements of $\upspec{\b}$ and $\downspec{\b}$ \demph{$r$-large} and \demph{$r$-small}, respectively. (So in the present context, $(p_A)_{A\in F}$ is $r$-large iff $|A|\geq r$ for all $A\in F$.) Given any $\phi = (p_A)_{A\in F}$ in $\cfT(\p)$, write $\phi^{\lfloor r \rfloor}$ and $\phi^{\lceil r \rceil}$ for the restriction of $\phi$ to its $r$-large and $r$-small $\p$-structures, respectively. Then \eqref{eq:TS-port} may be rewritten as
\begin{gather}\label{eq:TS-port-alternate}
    \phi\otimes \psi \xmapsto{\,\port^r_s\,} \phi^{\lfloor s \rfloor}\otimes \psi\cdot\pi_\theta(\phi^{\lceil s \rceil}).
\end{gather}

Finally, we remark that we could just as well have started from $\b=\cfS(\p)$. Mimicking the proof of Theorem \ref{th:abelianization}, using the the universal property of $\cfS(\mhyphen)$, one finds that $\upspec{\b} = \cfS(\p_{\geq r})$ is a complement to $\langle \downspec{\b}^+\rangle$. 

\begin{corollary}\label{th:near-abelianization}
Let $\theta:\p\onto\q$ be a surjective map of positive comonoids, and $\pi_\theta:\cfS(\p) \onto \cfS(\q)$ its natural extension (a Hopf map). Then
$\bigl\{\cfS({\p}_{\geq r})\sdot\cfS({\q}_{<r})\bigr\}_{r\geq1}$ is a one-parameter family of Hopf monoids interpolating $\pi_\theta$. \hfill \qed
\end{corollary}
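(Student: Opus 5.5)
The plan is to mimic the proof of Theorem~\ref{th:abelianization}, replacing $\cfT(\mhyphen)$ by $\cfS(\mhyphen)$ throughout, and then invoke Theorem~\ref{th:axiomatic}. This means verifying Axioms \ref{ax:comm}--\ref{ax:stab} for $\b=\cfS(\p)$, $\d=\cfS(\q)$ and $\pi=\pi_\theta$. We then identify the resulting $\rc=\upspec{\b}\sdot\downspec{\d}$ with $\cfS(\p_{\geq r})\sdot\cfS(\q_{<r})$.

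\textbf{Step 1: the map $\pi_\theta$.} The composite $\p\xrightarrow{\theta}\q\into\cfS(\q)_+$ is a morphism of positive comonoids. The universal property of $\cfS(\mhyphen)$ (the commutative analogue of Theorem~\ref{th:T(p)}, valid since $\cfS(\q)$ is commutative) yields a Hopf map $\pi_\theta$ given on simple tensors by
\[
\{p_A\}_{A\in X}\mapsto\{\theta(p_A)\}_{A\in X}.
\]
It is surjective because $\theta$ is, and $\d=\cfS(\q)$ is commutative. This gives Axiom~\ref{ax:comm}.

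\textbf{Step 2: the filtrations (Axiom~\ref{ax:filt}).} Use the canonical decompositions and put $\downspec{\b}=\cfS(\p_{<r})$ and $\downspec{\d}=\cfS(\q_{<r})$. These are Hopf submonoids. The coproduct of $\p$ sends $\p[A]$ into $\bigoplus\p[S]\otimes\p[T]$ with $|S|,|T|\leq|A|$, so $\p_{<r}$ is a subcomonoid. The coproduct of $\cfS(\p)$, which deshuffles blocks, therefore keeps every block of size $<r$ of size $<r$. The filtrations increase in $r$, and their limit is the whole species, because $\cfS(\p_{<r})_{\leq n}=\cfS(\p)_{\leq n}$ once $r>n$. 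Since $\theta$ preserves cardinality, $\pi_\theta(\downspec{\b})\subseteq\downspec{\d}$.

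\textbf{Step 3: the complement (Axiom~\ref{ax:comp}).} This is the only step with real content, and I expect it to be the main obstacle. The point is to check that freeness of $\cfS(\p)$ as a commutative monoid gives the needed description of the ideal. Because $\cfS(\p)$ is commutative, the ideal $\langle\downspec{\b}^+\rangle$ is just $\cfS(\p)\cdot\cfS(\p_{<r})_+$. Because $\cfS(\p)[I]=\bigoplus_{X\vdash I}\bigotimes_{A\in X}\p[A]$ is a direct sum over partitions, this ideal is exactly the span of those simple tensors $\{p_A\}_{A\in X}$ having at least one block with $|A|<r$. Its complement is the span of the simple tensors all of whose blocks have $|A|\geq r$, which is $\cfS(\p_{\geq r})$. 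These complements decrease in $r$ and contain $\mathbf1$.

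\textbf{Step 4: conclusion.} Axiom~\ref{ax:stab} holds automatically since everything is connected. Theorem~\ref{th:axiomatic} then produces the interpolating family $\rc\cong\cfS(\p_{\geq r})\sdot\cfS(\q_{<r})$ together with the carrying maps. As in \eqref{eq:TS-port-alternate}, $\port^r_s$ sends $\phi\otimes\psi$ to $\phi^{\lfloor s\rfloor}\otimes\psi\cdot\pi_\theta(\phi^{\lceil s\rceil})$.
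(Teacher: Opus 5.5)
Your proposal is correct and takes the same route as the paper. The paper proves this corollary by mimicking the proof of Theorem~\ref{th:abelianization} with $\cfS(\mhyphen)$ in place of $\cfT(\mhyphen)$: it uses the universal property of $\cfS(\mhyphen)$ to obtain $\pi_\theta$, notes that $\cfS(\p_{\geq r})$ is a complement to $\langle \downspec{\b}^+\rangle$, and then appeals to Theorem~\ref{th:axiomatic}, with your Step~3 spelling out the ideal description that the paper leaves implicit.
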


\subsubsection{Enumeration}
\label{sec:r-enum}

Let $\p$ and $\q$ be positive species with exponential generating functions $\p(x)$ and $\q(x)$, respectively.
Taking the filtrations coming from the canonical filtrations, we enumerate the species $\cfT(\upspec{\p})\sdot\cfS(\downspec{\q})$:
\[
    \cfT(\upspec{\p})\sdot\cfS(\downspec{\q})(x) \ = \ \frac{1}{1-\upspec{\p}(x)}\exp\left(\downspec{\q}(x)\right) \ = \ \frac{1}{1-\sum_{k\geq r}p_k\frac{x^k}{k!}}\exp{\biggl(\sum_{k<r}q_k\frac{x^k}{k!}\biggr)}.
\]
Similarly,
\[
\cfS(\upspec{\p})\sdot\cfS(\downspec{\q})(x) \ = \ \exp\biggl(\sum_{k\geq r}p_k\frac{x^k}{k!}\biggr)\exp{\biggl(\sum_{k<r}q_k\frac{x^k}{k!}\biggr)}.
\]
\section{From Hopf Monoids to Hopf Algebras}
\label{sec:fock-functors}

In this section, we give several concrete examples of the general framework developed in Section \ref{sec:interpolation}. For each, we use the so-called Fock functors to produce interpolating families of combinatorial Hopf algebras (pointing to relevant literature, where possible).
Notably, the first example (Section \ref{sec:r-comps}) recovers the Hopf algebras discussed in the introduction.

\medskip
Let $(\gvec,\otimes)$ be the category of graded vector spaces. Consider the \demph{Fock functors}
\(
\mathcal{K},\overline{\mathcal{K}},\mathcal{K}^\vee:\Spec\to \gvec
\)
defined by
\[
\mathcal{K}(\b) := \bigoplus_{n\geq0}\b[n],\ \ 
\overline{\mathcal{K}}(\b) := \bigoplus_{n\geq0}\b[n]_{\mathfrak{S}_n}, 
\ \ \text{and } \ \ \mathcal{K}^\vee(\b) := \bigoplus_{n\geq0}\b[n],
\]
where $\b[n]_{\mathfrak{S}_n}$ is the vector space of $\mathfrak{S}_n$-coinvariants of $\b[n]$. 
The last is the contragradient of the first. That is, $\Kvee(\b^*) \cong \K(\b)^*$.
An in-depth discussion of Fock functors can be found in \cite[Part III]{aguiar2010monoidal}.\footnote{We omit $\Kbv$ since over a field of characteristic zero, $\Kbv\!(\mhyphen)\cong \Kbar(\mhyphen)$.} For us, their salient feature is that they are \emph{bilax} functors, meaning they take (a) bimonoids to bialgebras and (b) bimonoid maps to bialgebra maps. (Indeed, connected Hopf monoids in $(\Spec,\sdot)$ are mapped to Hopf algebras in $(\gvec,\otimes)$, {\it cf.} Remark \ref{rem:hopf-emptyset}.)

A brief word about the enumeration of graded dimensions for the concrete examples we give below. The formulas in Section \ref{sec:r-enum} readily enumerate $\K(\rc)$ and $\Kvee(\rc)$ ({\it cf.} the  definitions of $\K$ and $\Kvee$ above). The story for $\Kbar(\rc)$ is more intricate (involving enumeration of unlabeled structures). Since enumeration is not the goal of this section, we refer the reader to \cite{bergeron1998combinatorial} or \cite{bergeron2008introduction} for all details.

\subsection{\texorpdfstring{$r$}{r}-Compositions}
\label{sec:r-comps}

Recall that $\bfSigma  =\cfT ( \bfE_+)$ and $\bfPi = \cfS ( \bfE_+)$. The abelianization map $\pi:\bfSigma\onto\bfPi$ and Theorem \ref{th:abelianization} gives us the interpolating family $\rspec{\bfSigma} = \upspec{\bfSigma}\sdot\downspec{\bfPi}$, with basis running over pairs $(\Phi,\phi)$ of $r$-large set compositions and $r$-small set partitions, respectively (on disjoint sets $S$ and $T$, as usual). Since we take the identity map on $\bfE_+$ for $\theta$ in Theorem \ref{th:abelianization}, the carrying maps take the form
\begin{align*}
\port_s^r:\rspec{\bfSigma} &\onto \rspec[s]{\bfSigma}
\\
(\Phi;\phi) &\mapsto \bigl(\Phi^{\lfloor s\rfloor};\pi(\Phi^{\lceil s\rceil})\cup\phi\bigr).
\end{align*}
Notation as in \eqref{eq:TS-port-alternate}.

\subsubsection{Hopf algebras of $r$-compositions}
The Fock functors give us the following:
\begin{center}
\begin{tikzcd}
    \K(\bfSigma) = \mathsf{M}\Pi \arrow[d, two heads] & \Kbar(\bfSigma) = \nsym  \arrow[d, two heads] & \Kvee(\bfSigma) = \ncqsym^* \arrow[d, two heads]  \\
    \K(\bfPi) = \ncsym & \Kbar(\bfPi) = \sym  & \Kvee(\bfPi) = \ncsym^*
\end{tikzcd}
\end{center}
and dually, $\ncsym^*\hookrightarrow\mathsf{M}\Pi^*$, $\sym\hookrightarrow\qsym$, and $\ncsym\hookrightarrow\ncqsym$.
So each of the six Hopf pairs has an interpolating family. (Including two families not previously appearing in the literature.) This completes the task laid out in the introduction: the maps $\{\sym \into \rqsym \into \qsym\}_{r\geq1}$, $\{\ncsym \into \rncqsym \into \ncqsym\}_{r\geq1}$, and their duals are the ones first defined by Hivert in \cite{hivert2004local}.

\begin{remark}
    The notation for $\K(\bfSigma)$ is taken from \cite[\S17.3.1]{aguiar2010monoidal}, this Hopf algebra was first introduced in the context of twisted bialgebras (essentially bimonoids in species) by Patras and Schocker \cite{patras2008trees}. 
    The notation for $\Kvee(\bfSigma)$ is taken from \cite{bergeron2009hopf}. After the work of Novelli--Thibon and Foissy \cite{novelli2006polynomial,foissy2007bidendriform} it is known to be a self-dual Hopf algebra.
\end{remark}

\subsection{\texorpdfstring{$r$}{r}-Bijections}
\label{sec:r-bij}

We turn to the Grossman--Larson Hopf algebra of heap-ordered trees  \cite{grossman1989hopf}. We use its realization as bijective endofunctions \cite{hivert2008commutative}.

\subsubsection{Hopf monoid of bijections}

Every bijection is a collection of cycles, $\bij=\bfE\scirc \cyc$, so we look for a positive comonoid structure on the species of cycles and then define the Hopf monoid $\bij$ as $\cfT(\cyc)$. We could take the trivial structure, as in \cite{aguiar2010monoidal}, but we choose to follow Marberg \cite{marberg2015linearization}.

Given $I=S\sqcup T$ with $S,T\neq \emptyset$, the coproduct on $\cyc$ is defined by restriction
\[
 \Delta_{S,T}^{\cyc}(\gamma) = \gamma|_S\otimes \gamma|_T.
 \]
Here $\gamma\vert_{S}$ represents the unique cycle on $S$ induced by $\gamma$; and likewise for $\gamma\vert_T$. One checks that the linear map $\theta:\cyc\to\bfE_+$ sending $\gamma \in \cyc[I]$ to $\ast_I$ is in fact a map of positive comonoids. Then Corollary \ref{th:near-abelianization} applies, giving us a family $\rspec{\bij}:=\cfS(\cyc_{\geq r})\sdot\cfS(\bfE_{<r})$ interpolating the Hopf map $\pi_\theta$. (Here $\pi_\theta:\bij \onto \bfPi$ maps a collection of cycles $\gamma_i$ on sets $S_i$ to the collection of sets $S_i$.) The carrying maps take the form
\begin{align*}
    \port_s^r:\rspec{\bij} &\onto \rspec[s]{\bij}
    \\
    (\sigma;\phi) &\mapsto \bigl(\sigma^{\lfloor s\rfloor};\pi_\theta(\sigma^{\lceil s\rceil})\cup\phi\bigr).
\end{align*}
Notation as in \eqref{eq:TS-port-alternate}. Concretely, $\sigma^{\lfloor s\rfloor}$ (respectively, $\sigma^{\lceil s\rceil}$) collects those cycles of $\sigma$ with length at least $s$ (respectively, less than $s$).

\subsubsection{Hopf algebras of bijections}
The Fock functors give us the following:
\begin{center}
\begin{tikzcd}
    \K(\bij) = \ssym \arrow[d, two heads] & \Kbar(\bij) = \sym  \arrow[d, two heads] & \Kvee(\bij) = \sqsym \arrow[d, two heads]  \\
    \K(\bfPi) = \ncsym & \Kbar(\bfPi) = \sym  & \Kvee(\bfPi) = \ncsym^*
\end{tikzcd}
\end{center}
and dually, $\ncsym^*\hookrightarrow\ssym^*$, $\sym\hookrightarrow\sym$, and $\ncsym\hookrightarrow\sqsym^*$. So the Fock functors and Theorem \ref{th:abelianization} provide four Hopf pairs with a nontrivial interpolating family.

\begin{remark}\label{rem:bij}
The notations $\ssym$\footnote{Not to be confused with the Malvenuto--Reutenauer Hopf algebra; denoted ``$\fqsym$'' in \cite{hivert2008commutative}.} and $\sqsym$ are taken from the work of Hivert, Novelli, and Thibon \cite{hivert2008commutative}, who show that $\ssym^*\cong \sqsym$. 
    The equality $\K(\bij) = \ssym$ is first observed by Marberg in \cite[Section 5.4]{marberg2015linearization}. 
The central map displayed above is in fact the identity map; see the discussion preceding Proposition \ref{th:Kbar(bij)}. The equality $\Kvee(\bij) = \sqsym$ is interesting because Marberg's result would suggest rather that $\Kvee(\bij^*) = \sqsym$. This coincidence is explained in the buildup to Proposition \ref{th:Kv(bij)}. 
\end{remark}

Recall the complete homogeneous symmetric functions $\{h_d\}_{d\geq1}$ form a free (commutative) generating set for $\sym$. Put $h_0=1$. Given a partition $\lambda=(\lambda_1,\ldots, \lambda_k)$, we put $h_\lambda := h_{\lambda_1}h_{\lambda_2}\cdots h_{\lambda_k}$ and 
\[
\Delta(h_\lambda) = \prod_{1\leq i \leq k}\Delta(h_{\lambda_i})
\quad\hbox{with}\quad
\Delta(h_d) = \sum_{i+j=d} h_i \otimes h_j.
\] 

The details of the $\Kbar$ construction\footnote{Specifically, the definitions of $\overline{\varphi}_{\h,\h}, \overline{\psi}_{\h,\h}$, $\mu^{\Kbar(\h)}$, and $\Delta^{\Kbar(\h)}$ in \cite[\S\S15.1.1 \& 15.2.1]{aguiar2010monoidal}.} provide that the canonical monoid generators for both $\bij$ and $\bfPi$ map to (free, commutative) generators $\{\hbar_d\}_{d\geq1}$ satisfying
\[
\Delta(\hbar_d) = \sum_{i+j=d} \binom{d}{i} \hbar_i \otimes \hbar_j.
\]
({\it E.g.}, $\overline{(1324)} \in \Kbar(\bij)$ is identified with $\hbar_4$.) One checks that the mapping $\hbar_d \mapsto d!\,h_d$ extends to a Hopf isomorphism.

\begin{proposition}\label{th:Kbar(bij)}
    The Hopf algebras $\Kbar(\bij)$ and $\sym$ are isomorphic. \hfill\qed
\end{proposition}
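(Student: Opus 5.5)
The plan is to identify $\Kbar(\bij)$ explicitly as a graded algebra and coalgebra, and then compare it with $\sym$ through the rescaled generators $\hbar_d \mapsto d!\,h_d$.

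\emph{Algebra structure.} Since $\bij=\cfS(\cyc)=\bfE\scirc\cyc$ is free commutative, $\Kbar(\bij)$ is the polynomial algebra on $\Kbar(\cyc)$ (cf.\ \cite[\S15]{aguiar2010monoidal}, where $\Kbar(\cfS(\p))\cong S(\Kbar(\p))$). Here $\cyc[n]_{\mathfrak S_n}$ is one-dimensional for every $n\geq1$, since all $n$-cycles are conjugate. Its class is $\hbar_n$. Hence $\Kbar(\bij)=\Bbbk[\hbar_1,\hbar_2,\ldots]$ with $\deg\hbar_n=n$, and coinvariant classes of bijections correspond to the monomials $\hbar_\lambda$ indexed by cycle type $\lambda$.

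\emph{Coalgebra structure.} I would compute $\Delta(\hbar_d)$ from $\overline\psi$, which sums $\Delta_{S,T}$ over the decompositions $S\sqcup T=[d]$ and passes to coinvariants. For a $d$-cycle $\gamma$, the restrictions $\gamma|_S$ and $\gamma|_T$ are again cycles, with classes $\hbar_{|S|}$ and $\hbar_{|T|}$. Summing over the $\binom{d}{i}$ subsets with $|S|=i$ gives
\[
\Delta(\hbar_d)=\sum_{i+j=d}\binom{d}{i}\hbar_i\otimes\hbar_j,
\]
where the empty edge cases contribute $1\otimes\hbar_d$ and $\hbar_d\otimes1$. Since $\Delta$ is multiplicative, this determines the coproduct.

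\emph{The isomorphism.} Define an algebra map $f$ by $f(\hbar_d)=d!\,h_d$. It is an isomorphism of algebras because $\Char\Bbbk=0$, so each $d!$ is invertible, and both sides are polynomial rings on one generator per degree. To see that $f$ respects coproducts, it suffices to check generators:
\[
(f\otimes f)\Delta(\hbar_d)=\sum\binom{d}{i}i!\,j!\,h_i\otimes h_j=d!\sum h_i\otimes h_j=\Delta(f(\hbar_d)).
\]
Counits match by degree, and the antipode is automatic since both algebras are connected graded. Thus $f$ is a Hopf isomorphism $\Kbar(\bij)\cong\sym$.

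The main obstacle is confirming the exact normalization of $\overline\psi$ in $\Kbar$, namely that it really yields the factor $\binom{d}{i}$ and not some other constant. That factor is the only place where the choice $d!$ in $f$ matters. Even if the constant turned out different, any coproduct of the form $\Delta(\hbar_d)=\sum c_{i,j}\hbar_i\otimes\hbar_j$ with the resulting Hopf algebra isomorphic to the divided-power-type structure would be handled by rescaling the generators in the same way.
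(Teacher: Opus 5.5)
Your proposal is correct and follows the same route as the paper. You identify $\Kbar(\bij)$ as the polynomial algebra on the classes $\hbar_d$ of $d$-cycles, read off $\Delta(\hbar_d)=\sum_{i+j=d}\binom{d}{i}\hbar_i\otimes\hbar_j$ from the $\Kbar$ structure maps, and check that $\hbar_d\mapsto d!\,h_d$ is a Hopf isomorphism. The normalization worry, and the closing hedge, can be dropped: $\overline\psi$ is induced by standardization and is inverse to $\overline\varphi$, so summing $\Delta_{S,T}$ over all $S\sqcup T=[d]$ gives exactly the factor $\binom{d}{i}$ that the paper records.
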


In \cite{marberg2015linearization}, Marberg considers $\cfS(\p)$ for positive linearized cocommutative comonoids $\p=\Bbbk{\rfp}$. With these assumptions, the coproduct in $\p$ comes from set maps $\rfp[S\sqcup T] \rightarrow \rfp[S]\times\rfp[T]$, $p \mapsto p\vert_{S}\times p\vert_{T}$. (The notation evokes {restriction}, which is supported by the work of Schmitt \cite{schmitt1993hopf}; see \cite[Sec. 8.7]{aguiar2010monoidal}.) Marberg introduces the \demph{restriction poset} $(\rfE\circ\rfp, <)$ and uses it to build new bases for $\cfS(\p)$ (mimicking change-of-basis formulas in the theory of symmetric functions). To illustrate, we compute here the ``powersum'' basis of $\bij$ and show how the image of the Fock functors relate to some Hopf algebras in the literature.

Let $\sigma = \{\gamma_1,\ldots \gamma_i, \ldots,\gamma_k\}$ be a bijection which is a set of cycles $\gamma_i$. Then a bijection $\sigma'$ is covered by $\sigma$ if there exists an integer $i$ and sets $S$ and $T$ such that $\sigma' = \{ \gamma_1,\ldots, \gamma_{i-1}, \gamma_i|_S, \gamma_i|_T,\gamma_{i+1},\ldots,\gamma_k \}.$ Following Marberg, the powersum basis is defined, via the M\"obius function, as
\[
p_\sigma = \sum_{\sigma' \preceq \sigma } \ddot{\mu}(\sigma',\sigma) \sdot \sigma'.
\]
For example, consider the bijection $\{(ad)(bc)\}$ on the set $\{a,b,c,d\}$. Then, 
\[
p_{(ad)(bc)} = (ad)(bc) - (a)(bc)(d)-(ad)(b)(c)+(a)(b)(c)(d).
\]
See Figure \ref{fig:powersum-poset} for a larger example. Note that lower order ideals in $(\bij,<)$ are product posets. As a consequence, we have $\mu(p_{\sigma'} \otimes p_{\sigma''}) = p_{\sigma'\cup\sigma''}$ for any $\sigma' \in \mathrm{bij}[S]$,  $\sigma''\in \mathrm{bij}[T]$. The M\"obius values in the definition of $p_\sigma$ ensure that
\[
 \Delta_{S,T}(p_{\sigma}) = 
 \begin{cases}
    p_{\sigma|_S}\otimes p_{\sigma|_T} & \text{if } \sigma|_S \cup \sigma|_T  = \sigma, \\
    0 & \text{else}
\end{cases}
\]
for all $\sigma\in\mathrm{bij}[I]$.
\begin{figure}[!htb]
    \centering
\newtoggle{showmobius}
\toggletrue{showmobius}

\iftoggle{showmobius}{\colorlet{mobiuscolor}{blue}}{\colorlet{mobiuscolor}{white}}
\iftoggle{showmobius}{\colorlet{mobiuscaption}{black}}{\colorlet{mobiuscaption}{white}}

\tikzset{
  mobius/.style={font=\tiny, text=mobiuscolor, inner sep=0pt}
}

\begin{tikzpicture}[
  scale=1,
  every node/.style={inner sep=3pt},
  font=\small
]
\node (top) at (0,6) {$(adcb)$};
\node[mobius] at ([xshift=0pt, yshift=7pt]top.south west) {$1$};
\node (t5) at (-5.7,4.2)  {$(ad)(bc)$};
\node (t2) at (-3.8,4.2)  {$(a)(bdc)$};
\node (t1) at (-1.9,4.2)  {$(adc)(b)$};
\node (t3) at (0,4.2)   {$(acb)(d)$};
\node (t6) at (1.9,4.2)   {$(ab)(cd)$};
\node (t4) at (3.8,4.2)   {$(adb)(c)$};
\node (t7) at (5.7,4.2)   {$(ac)(bd)$};
\node[mobius] at ([xshift=-2pt, yshift=2pt]t5.south west) {$-1$};
\node[mobius] at ([xshift=-2pt, yshift=2pt]t2.south west) {$-1$};
\node[mobius] at ([xshift=-2pt, yshift=2pt]t1.south west) {$-1$};
\node[mobius] at ([xshift=-2pt, yshift=2pt]t3.south west) {$-1$};
\node[mobius] at ([xshift=-2pt, yshift=2pt]t6.south west) {$-1$};
\node[mobius] at ([xshift=-2pt, yshift=2pt]t4.south west) {$-1$};
\node[mobius] at ([xshift=-2pt, yshift=2pt]t7.south west) {$-1$};
\node (m4) at (-5,1.8) {$(ad)(b)(c)$};
\node (m3) at (-3,1.8) {$(a)(bc)(d)$};
\node (m1) at (-1,1.8) {$(a)(b)(cd)$};
\node (m2) at (1,1.8)  {$(a)(bd)(c)$};
\node (m6) at (3,1.8)  {$(ab)(c)(d)$};
\node (m5) at (5,1.8)  {$(ac)(b)(d)$};
\node[mobius] at ([xshift=0pt, yshift=1pt]m4.south west) {$2$};
\node[mobius] at ([xshift=0pt, yshift=1pt]m3.south west) {$2$};
\node[mobius] at ([xshift=0pt, yshift=1pt]m1.south west) {$2$};
\node[mobius] at ([xshift=0pt, yshift=1pt]m2.south west) {$2$};
\node[mobius] at ([xshift=0pt, yshift=1pt]m6.south west) {$2$};
\node[mobius] at ([xshift=0pt, yshift=1pt]m5.south west) {$2$};
\node (bot) at (0,0) {$(a)(b)(c)(d)$};
\node[mobius] at ([xshift=-2pt, yshift=-2pt]bot.south west) {$-6$};
\foreach \x in {t1,t2,t3,t4,t5,t6,t7}
  \draw (top) -- (\x);
\draw (t1) -- (m1);
\draw (t1) -- (m5);
\draw (t1) -- (m4);
\draw (t2) -- (m3);
\draw (t2) -- (m2);
\draw (t2) -- (m1);
\draw (t3) -- (m6);
\draw (t3) -- (m5);
\draw (t3) -- (m3);
\draw (t4) -- (m4);
\draw (t4) -- (m2);
\draw (t4) -- (m6);
\draw (t5) -- (m3);
\draw (t5) -- (m4);
\draw (t6) -- (m1);
\draw (t6) -- (m6);
\draw (t7) -- (m2);
\draw (t7) -- (m5);
\foreach \x in {m1,m2,m3,m4,m5,m6}
  \draw (\x) -- (bot);
\end{tikzpicture}

\caption{Elements beneath $\hat1=(adcb)$ in the restriction poset on $\mathrm{bij}[\{a,b,c,d\}]$. \textcolor{mobiuscaption}{Mobius values \textcolor{mobiuscolor}{$\ddot{\mu}(\sigma,\hat1)$} indicated in blue.}}
    \label{fig:powersum-poset}
\end{figure}
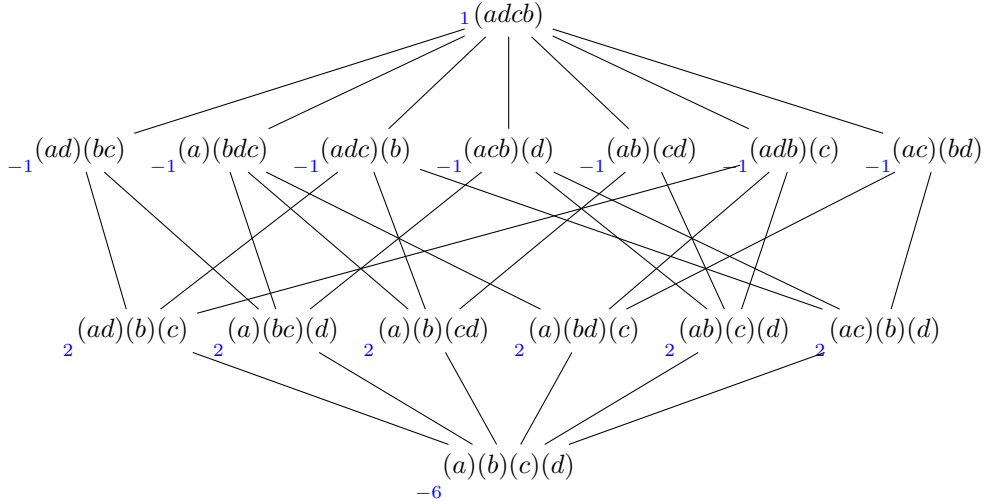

The structure of $\bij$ in the powersum basis is what allows Marberg to make his observation $\K(\bij)=\ssym$. Similarly, one can check that the Hopf structure of $\Kvee(\bij; p)$ matches that of $\sqsym$ in the $\{ M_\sigma \}$ basis from \cite{hivert2008commutative}. The curiosity mentioned in Remark \ref{rem:bij} is explained away as follows: $\bij^* \cong \bij$. This claim, not at all evident when working in the canonical basis for $\bij$, is trivial to verify in the powersum basis.

\begin{proposition}\label{th:Kv(bij)}
    The Hopf algebras $\Kvee(\bij)$ and $\sqsym$ are isomorphic. \hfill\qed
\end{proposition}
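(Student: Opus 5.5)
The plan is to compare the two Hopf algebras directly on bases. On the $\sqsym$ side I will use the $\{M_\sigma\}$ basis of Hivert--Novelli--Thibon. On the $\Kvee(\bij)$ side I will use the basis dual to Marberg's powersum basis $\{p_\sigma\}$ of $\bij$.

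The first step is to record the structure of $\bij$ in the powersum basis, as computed above:
\begin{itemize}
\item the product is $p_{\sigma'}\otimes p_{\sigma''}\mapsto p_{\sigma'\cup\sigma''}$;
\item the coproduct is $\Delta_{S,T}(p_\sigma)=p_{\sigma|_S}\otimes p_{\sigma|_T}$ when $\sigma$ is the disjoint union of its restrictions to $S$ and $T$, and $0$ otherwise.
\end{itemize}
The coproduct condition says exactly that $S$ is a union of cycles of $\sigma$. So in the $p$-basis, $\bij$ is the linearized Hopf monoid on the set species of bijections, with product given by disjoint union and coproduct by deconcatenation along unions of cycles. It is therefore both commutative and cocommutative.

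Second, I will dualize to obtain the claimed $\bij^*\cong\bij$. Let $\{p^*_\sigma\}$ be the dual basis of $\bij^*$. Then the dual product is $p^*_{\sigma'}\cdot p^*_{\sigma''}=p^*_{\sigma'\cup\sigma''}$ and the dual coproduct is $p^*_\sigma\mapsto p^*_{\sigma|_S}\otimes p^*_{\sigma|_T}$ when $S$ is a union of cycles. These are the same structure constants as before, so $p_\sigma\mapsto p^*_\sigma$ is an isomorphism of Hopf monoids, and it is natural in bijections of the label set. Applying $\Kvee$ then gives $\Kvee(\bij)\cong\Kvee(\bij^*)\cong \K(\bij)^*$. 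By Marberg's observation $\K(\bij)=\ssym$, this is $\ssym^*$, which is isomorphic to $\sqsym$ by Hivert--Novelli--Thibon.

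As a more transparent check, I will also compare structure constants directly. By the definitions of $\varphi,\psi$ in \cite[\S15]{aguiar2010monoidal}:
\begin{itemize}
\item the product in $\Kvee(\bij)$ of $p_\sigma\in\bij[m]$ and $p_\tau\in\bij[n]$ is the sum over all shuffles, that is, over subsets $S\subseteq[m+n]$ of size $m$, of $p$ indexed by the union of the standardizations of $\sigma$ and $\tau$ transported onto $S$ and its complement;
\item the coproduct of $p_\sigma$ is the sum over splittings $[n]=S\sqcup T$ into unions of cycles of $\mathrm{st}(\sigma|_S)\otimes\mathrm{st}(\sigma|_T)$.
\end{itemize}
I would verify that these are exactly the product and coproduct formulas of $\sqsym$ on $\{M_\sigma\}$. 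The product is a shifted-shuffle-type union of cycle decompositions, and the coproduct splits the cycle set into two parts followed by standardization.

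The main obstacle is matching conventions. Hivert--Novelli--Thibon state $\sqsym$ via polynomial realizations, so their product on $M_\sigma$ must be read off carefully, and it may appear as a sum over ``cycle-compatible'' shuffles. I would first test degrees $\le 3$ to fix whether the match is $p_\sigma\mapsto M_\sigma$ or $p_\sigma\mapsto M_{\sigma^{-1}}$. I would then argue in general via the dual pairing with $\ssym=\K(\bij)$, where Marberg has already done the identification. That reduces the claim to the self-duality $\bij^*\cong\bij$, which is trivial in the $p$-basis.
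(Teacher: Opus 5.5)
Your main route is essentially the paper's. Working in Marberg's powersum basis makes the self-duality $\bij^*\cong\bij$ trivial, and combining it with $\K(\bij)=\ssym$ and $\ssym^*\cong\sqsym$ gives $\Kvee(\bij)\cong\Kvee(\bij^*)\cong\K(\bij)^*\cong\sqsym$, which is how the paper accounts for the result alongside its asserted direct match of the $p$-basis with the $M_\sigma$ basis. One correction to your secondary check: in $\Kvee$ the coproduct uses only the splittings $[i]\sqcup[i+1,n]$, so $p_\sigma\mapsto\sum_i p_{\mathrm{st}(\sigma|_{[i]})}\otimes p_{\mathrm{st}(\sigma|_{[i+1,n]})}$ over those $i$ for which $[i]$ is a union of cycles of $\sigma$; the sum over all cycle-unions $S$ that you wrote is the $\K$ coproduct, which is cocommutative and so could not match $\sqsym\cong\ssym^*$.
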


\subsection{$r$-Parking Functions}
\label{r-parking}

We turn to the Hopf algebra of parking functions defined in \cite{hivert2008commutative}. 
We begin with a definition of the set species $\mathrm{PF}$ of parking functions. (See also \cite{priez2015noncommutative}, where the authors give a recursive description and enumerative results.) 

Given a set $I$ of cardinality $n$, recall a function $\Phi:I\to [n]$ is nothing more than a list of possibly empty disjoint subsets $(A_1, A_2, \ldots, A_n)$ whose union is $I$; {\it i.e.}, a weak composition of $I$ into $n$ parts. We call $\Phi$ a \demph{parking function} if the subset sizes additionally satisfy
\[
    \sum_{i=1}^k |A_i|\geq k \hbox{ for each }1\leq k \leq n.
\]
Let $\pf = \Bbbk\mathrm{PF}$ be the corresponding vector species.

\subsubsection{Hopf monoid of parking functions}

Parking functions have a monoid structure with the product being concatenation of weak compositions:
\begin{gather*}
\mu^\pf_{S,T}: \Phi \otimes \Psi \mapsto \Phi|\Psi.
\end{gather*}
Every parking function $\Phi$ can be maximally decomposed as a concatenation $\Phi=\Phi^{(1)}|\Phi^{(2)}|\cdots|\Phi^{(k)}$ of smaller (positive) parking functions. We call the indecomposable pieces \demph{block parking functions}. Evidently, $\pf = \bfL \circ \mathbf{BPF}$ as a monoid. 
The coproduct we choose (below) does not come from a comonoid structure on block parking functions; so $\pf$ is not realized as $\cfT(\mathbf{BPF})$.

For any parking function $\Phi$ on $I$ (a weak composition), consider the induced weak composition of $S$ ($S\subseteq I$): $\Phi_{\cap{S}} = (A_1\cap S, A_2\cap S, \ldots, A_n\cap S)$. This list is not a parking function on $S$ for proper subsets $S\subsetneq I$. This deficiency is repaired with the \demph{parkization} function:
\begin{gather*}
\Delta^\pf_{S,T}: \Phi \mapsto \park(\Phi_{\cap S}) \otimes \park(\Phi_{\cap T})
\end{gather*}
We follow \cite{novelli2007hopf} and define ``$\park$'' as an algorithm (Algorithm \ref{alg:parkization}). (Briefly: given a weak composition $\Gamma$ of $S$, $\park(\Gamma)$ attempts\footnote{The algorithm succeeds if $\Gamma = \Phi_{\cap S}$ for a parking function $\Phi$, but may fail more generally.} to build a parking function $\Phi$ on $S$ by marching through the (possibly empty) blocks of $\Gamma$ and keeping those that do not violate the parking condition.) 
Table \ref{tbl:parkization} illustrates $\park(\Gamma)$ for a weak composition $\Gamma$ of $\{a,b,c,d,e\}$. 
\begin{algorithm}
\caption{Definition of $\park(\Gamma)$}
\begin{algorithmic}
\State{Initialize $\Phi := ()$; \ $\dot\Gamma:=\Gamma$}
\State{\textcolor{black!50}{\# each block of $\dot\Gamma$ shall be moved to $\Phi$ or discarded}}
\While{$\dot\Gamma \neq()$}:
\If{$|\Phi|+|\dot\Gamma_1|\geq \ell(\Phi)+1$}:
\State{$\Phi := \Phi|\dot\Gamma_1; \ \dot\Gamma := (\dot\Gamma_2,\dot\Gamma_3,\ldots)$}
\Else{}:
\State{$\dot\Gamma := (\dot\Gamma_2,\dot\Gamma_3,\ldots)$}
\EndIf
\EndWhile
\State{\Return{$\Phi$}}
\end{algorithmic} \label{alg:parkization}
\end{algorithm} 
\begin{table}\small
\begin{align*}
\Phi &= (){\color{gray!40}\,,a\,?\,\cmark} &
\dot\Gamma &= ({\color{black}\underline{a}},\emptyset,bcd,\emptyset,e,\emptyset,\emptyset)
    \\
\Phi &= (a){\color{gray!40}\,,\emptyset\,?\,\xmark} &
\dot\Gamma &= ({\color{black}\underline{\emptyset}},bcd,\emptyset,e,\emptyset,\emptyset)
    \\
\Phi &= (a){\color{gray!40}\,,bcd\,?\,\cmark} &
\dot\Gamma &= ({\color{black}\underline{bcd}},\emptyset,e,\emptyset,\emptyset)
    \\
\Phi &= (a,bcd){\color{gray!40}\,,\emptyset\,?\,\cmark} &
\dot\Gamma &= ({\color{black}\underline{\emptyset}},e,\emptyset,\emptyset)
    \\
\Phi &= (a,bcd,\emptyset){\color{gray!40}\,,e\,?\,\cmark} &
\dot\Gamma &= ({\color{black}\underline{e}},\emptyset,\emptyset)
    \\
\Phi &= (a,bcd,\emptyset,e){\color{gray!40}\,,\emptyset\,?\,\cmark} &
\dot\Gamma &= ({\color{black}\underline{\emptyset}},\emptyset)
    \\
\Phi &= (a,bcd,\emptyset,e,\emptyset){\color{gray!40}\,,\emptyset\,?\,\xmark} &
\dot\Gamma &= ({\color{black}\underline{\emptyset}})
    \\
\Phi &= (a,bcd,\emptyset,e,\emptyset) &
\dot\Gamma &= ()
\end{align*}
\caption{Parkization of $\Gamma = (a, \emptyset, bcd, \emptyset, e, \emptyset, \emptyset)$.}    
\label{tbl:parkization}
\end{table}

We need two lemmas, which follow easily from the definition of parkization. Some notation first: $\Gamma=(B_1,\ldots, B_k)$ is any weak set composition, we write $\ell(\Gamma)$ for its \emph{length}, $k$, and $|\Gamma|$ for its \emph{size}, $\sum_i |B_i|$.

\begin{lemma}[Adding empty sets] Write $\emptyset^k$ for the $k$-term list $(\emptyset,\ldots,\emptyset$). Suppose $\Gamma=\Gamma^{(1)}|\Gamma^{(2)}$ is a weak set composition satisfying: (i) $\ell(\Gamma) \geq |\Gamma|$ and (ii) $\ell(\Gamma^{(1)}) = |\Gamma^{(1)}|$. Then for all $k\geq0$, we have
\[
    \park(\Gamma) = \park(\emptyset^k|\Gamma)
    =
    \park(\Gamma^{(1)}|\emptyset^k|\Gamma^{(2)})
    = 
    \park(\Gamma|\emptyset^k).
\]
\end{lemma}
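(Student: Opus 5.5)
The plan is to replace Algorithm \ref{alg:parkization} by a simple numerical invariant and read all three equalities off it. Let $e := |\Phi| - \ell(\Phi)$ be the \emph{excess} of the partially built output. Since $\Phi$ starts as $()$ and the acceptance test is $|\Phi|+|\dot\Gamma_1|\geq \ell(\Phi)+1$, the update rule for a block $B$ of size $m$ is as follows. The block is accepted iff $e+m\geq 1$, in which case $e$ becomes $e+m-1$. If it is rejected, $e$ stays put; rejection happens only when $m=0$ and $e=0$. By induction $e\geq 0$ throughout, so two facts follow:
\begin{itemize}
\item every nonempty block is always accepted;
\item an empty block is accepted exactly when $e\geq1$.
\end{itemize}
Hence after reading a prefix $\Gamma_{\leq j}$, the output contains all elements of that prefix. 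Its excess follows the reflected walk $e_j=\max(e_{j-1}+|B_j|-1,\,0)$, with $e_0=0$.

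The key observation is then immediate. If the algorithm reaches some point with $e=0$, then any run of empty blocks read next is discarded, and the state $(\Phi,e)$ is unchanged. After the run, the algorithm continues exactly as it would have without it. I would apply this at three places.
\begin{itemize}
\item At the start, where $e=0$. This gives $\park(\emptyset^k|\Gamma)=\park(\Gamma)$ outright, with no hypotheses needed.
\item At the cut between $\Gamma^{(1)}$ and $\Gamma^{(2)}$. This gives the middle equality, provided the excess there is $0$.
\item At the end of $\Gamma$. This gives $\park(\Gamma|\emptyset^k)=\park(\Gamma)$, provided the final excess is $0$.
\end{itemize}

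So everything reduces to showing that the excess vanishes at the cut and at the end. Here I would use hypotheses (i) and (ii). Write $S_j=|\Gamma_{\leq j}|-j$ for the unreflected walk. The reflected walk satisfies $e_j = S_j-\min\bigl(0,\min_{i\leq j}S_i\bigr)$. Now translate the hypotheses:
\begin{itemize}
\item hypothesis (ii) says $S=0$ at the cut;
\item hypothesis (i) says $S_{\ell(\Gamma)}\leq 0$ at the end.
\end{itemize}
Vanishing of $e$ at these two points is equivalent to $S$ attaining its running minimum there (with that minimum $\leq 0$). At the cut this means $S_i\geq 0$ for all earlier $i$. At the end it means $S_i\geq S_{\ell(\Gamma)}$ for all $i$.

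I expect this step to be the main obstacle. Hypotheses (i) and (ii), read literally, constrain $S$ only at the two endpoints, not along the prefix. My first attempt would be to extract the needed prefix control from the situation in which the lemma is applied, namely $\Gamma$ of the form $\Phi_{\cap S}$ with $\Phi$ a parking function. I would then check that (i) and (ii) there force the running-minimum property through the counting $\sum_{i\leq k}|A_i|\geq k$.

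If that does not go through, I would check small cases directly with the reflected-walk description. For example, $\Gamma^{(1)}=(\emptyset,a)$ satisfies (ii), yet it ends with excess $1$, so a subsequent $\emptyset$ would be accepted. Such cases would show exactly what extra condition on prefixes the proof must invoke. With that condition secured, the three equalities follow at once from the discard-at-zero-excess observation.
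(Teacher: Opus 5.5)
Your excess reformulation is correct: a block is accepted iff $e+m\ge 1$, $e\ge 0$ throughout, $e_j=S_j-\min(0,\min_{i\le j}S_i)$, and empty blocks read at excess $0$ are discarded without changing the state. This proves $\park(\emptyset^k|\Gamma)=\park(\Gamma)$ unconditionally, and it shows that the other two equalities follow once the excess vanishes at the cut and at the end. But the proposal stops exactly there, and that step cannot be completed, because under (i) and (ii) as stated the lemma is false.

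Take $\Gamma^{(1)}=(\emptyset,ab)$, $\Gamma^{(2)}=(c,\emptyset)$, $k=1$. Then (i) reads $4\ge 3$ and (ii) reads $2=2$, but the excess at the cut is $1$. Indeed $\park(\Gamma)=(ab,c,\emptyset)$ while $\park(\Gamma^{(1)}|\emptyset|\Gamma^{(2)})=(ab,\emptyset,c)$. Similarly, $\Gamma=(a)|(\emptyset,bc)$ satisfies (i) and (ii), yet $\park(\Gamma)=(a,bc)\neq(a,bc,\emptyset)=\park(\Gamma|\emptyset)$. Your own test case is miscomputed: $(\emptyset,a)$ has length $2$ and size $1$, so it violates (ii), and its final excess is $0$, not $1$. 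The example you wanted is $(\emptyset,ab)$.

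Your first proposed fix only half works. Suppose $\Gamma=\Phi_{\cap J}$ for a parking function $\Phi$ on $I$, so $\ell(\Gamma)=|I|$. Then $|\Gamma_{>j}|\le|\Phi_{>j}|\le |I|-j$ for every $j$, which gives $S_{\ell(\Gamma)}\le\min(0,\min_j S_j)$. So the final excess is $0$ and the last equality holds. But nothing in that setting controls the excess at the cut: the first counterexample above is $\Phi_{\cap\{a,b,c\}}$ for the parking function $\Phi=(d,ab,c,\emptyset)$.

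What your argument actually needs is two strengthened hypotheses:
(ii$'$) $\Gamma^{(1)}$ is itself a parking function, i.e.\ $S_i\ge 0$ for all $i$ up to the cut, with equality at the cut;
(i$'$) every suffix of $\Gamma$ has size at most its length.
With these in place of (i) and (ii), your discard-at-zero-excess observation gives all three equalities at once.

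The paper gives no proof here beyond asserting that the lemma follows easily from the definition, so it is the statement itself that needs strengthening. The compatibility identity $\park((\Phi|\Psi)_{\cap S'})=\park(\Phi_{\cap A})|\park(\Psi_{\cap C})$ only requires the end-of-block fact. The excess after reading $\Phi_{\cap A}$ is $0$ by the suffix bound, so the run on $\Psi_{\cap C}$ proceeds exactly as it would from scratch.
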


\begin{lemma}[Taking subsets] Given $\Gamma=(B_1,\ldots,B_n)$ and $\Gamma' = (A_1,\ldots,A_n)$ with $A_i\subseteq B_i$ $\forall i$. If $B_i$ is deleted in $\park(\Gamma)$  (maintaining the original numbering of the blocks) then $A_i$ is deleted in $\park(\Gamma')$ (possibly at a later step in the algorithm).
\end{lemma}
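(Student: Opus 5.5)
The plan is a direct induction on the steps of Algorithm~\ref{alg:parkization}, run on $\Gamma$ and $\Gamma'$ side by side. Write $\Phi_j$ and $\Phi'_j$ for the partial outputs after the algorithm has processed blocks $1,\ldots,j$ of $\Gamma$ and $\Gamma'$, respectively. The claim to maintain is the invariant
\[
|\Phi_j| - \ell(\Phi_j) \;\geq\; |\Phi'_j| - \ell(\Phi'_j) \qquad (0\le j\le n).
\]
Call $|\Phi|-\ell(\Phi)$ the \emph{excess} of a partial output. The acceptance test ``$|\Phi|+|\dot\Gamma_1|\geq \ell(\Phi)+1$'' then reads: block $j$ is kept iff (excess before step $j$) $+\,|\text{block}_j| \geq 1$. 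When a block of size $b$ is kept, the excess changes by $b-1$; when it is discarded, the excess is unchanged. This monotone reformulation is the first thing I would record.

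The base case $j=0$ is trivial, since both excesses are $0$. For the inductive step I would compare block $j$, with $A_j\subseteq B_j$ so that $a:=|A_j|\le b:=|B_j|$, and let $e\geq e'$ be the current excesses. There are four cases.

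\begin{itemize}
\item If both blocks are kept, then $e+b-1\geq e'+a-1$.
\item If both are discarded, the excesses are unchanged.
\item If $B_j$ is kept and $A_j$ is discarded, we need $e+b-1\geq e'$. This holds because keeping requires $e+b\geq1$, and always $e'\le e$ hmm — more carefully: discarding $A_j$ means $e'+a\le 0$, so $e'\le -a$. Also keeping $B_j$ gives $e+b-1\ge 0$. If $e'\le 0$ we are done; $e'\le -a\le 0$ indeed, so $e+b-1\geq 0\geq e'$.
\item If $B_j$ is discarded, then $e+b\le 0$, so $e'+a\le e+b\le 0$ and $A_j$ is discarded too.
\end{itemize}

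The last case is exactly the statement of the lemma: deletion of $B_i$ forces deletion of $A_i$. The invariant carries through all four cases.

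The only delicate point, which I expect to be the main obstacle, is the third case. There $\Gamma'$ falls ``behind'' in accepted blocks, and one must check that the excess inequality does not reverse. The computation above handles it using $e'\le -a\le0\le e+b-1$.

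A second subtlety is the phrase ``maintaining the original numbering of the blocks.'' Because the algorithm scans blocks of $\dot\Gamma$ in order, and the indexing $j$ above refers to original positions, the two runs are synchronized step-by-step. So ``possibly at a later step'' only reflects that $\park(\Gamma')$ may have accepted fewer earlier blocks, and requires no separate argument.
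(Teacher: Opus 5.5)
Your proof is correct: the acceptance test is equivalent to (current excess) $+$ (block size) $\ge 1$, the invariant $|\Phi_j|-\ell(\Phi_j)\ge|\Phi'_j|-\ell(\Phi'_j)$ survives all four cases, and the impossibility of the fourth case is exactly the lemma. The paper gives no written proof beyond saying the lemma follows easily from the definition of parkization, and your side-by-side induction supplies that verification. One small simplification: the excess is always nonnegative, so a block is discarded only when it is empty and the current excess is $0$, which makes your third case immediate.
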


Armed with these preparatory lemmas, the following formulas are straightforward to verify. They yield, respectively, coassociativity of $\Delta^{\pf}$ and compatibility between $\mu^{\pf}$ and $\Delta^{\pf}$.

\begin{enumerate}
\item Fix $I=R\sqcup S\sqcup T$ as in \eqref{eq:coassociative}. Given $\Phi\in\mathrm{\pf}[I]$, 
\[
    \park(\park(\Phi_{\cap(S\sqcup T)})_{\cap S}) = \park(\Phi_{\cap S}).
\]
\item Fix $I = S\sqcup T$ and $S'=A\sqcup C$ as in \eqref{eq:compatible-delta}. Given $\Phi\in\mathrm{\pf}[S]$ and $\Psi\in\mathrm{\pf}[T]$, 
\[
    \park((\Phi|\Psi)_{\cap S'}) = \park(\Phi_{\cap A})|\park(\Psi_{\cap C}).
\]
\end{enumerate}

\begin{theorem}\label{th:PF-is-hopf}
The species $\pf$ is a Hopf monoid under $\mu^{\pf}$ and $\Delta^{\pf}$. \qed
\end{theorem}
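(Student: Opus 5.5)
We verify the axioms of a connected bimonoid displayed in Section~\ref{sec:monoidal structures}, namely \eqref{eq:associative}, \eqref{eq:coassociative} and \eqref{eq:compatible-delta}. By Remark~\ref{rem:hopf-emptyset} the antipode then comes for free. For connectedness: the only weak composition of $\emptyset$ of length $|\emptyset|=0$ is the empty list $()$, so $\pf[\emptyset]=\Bbbk$. We take the unit to be $()$ and the counit to be projection onto $\pf[\emptyset]$; the (co)unit axioms are then immediate, since $\park(\Phi_{\cap I})=\Phi$ for $\Phi$ a parking function on $I$ (the algorithm keeps every block) and $\park(\Phi_{\cap\emptyset})=()$ (every block is empty and the test fails from the first step, since $0\geq 1$ is false).

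\emph{Step 1: the product is well defined and associative.} Let $\Phi\in\mathrm{PF}[S]$ and $\Psi\in\mathrm{PF}[T]$. The concatenation $\Phi|\Psi$ has length $|S|+|T|=|I|$. Its prefix sums dominate $k$: on the $\Phi$-part this is the parking condition for $\Phi$, and on the $\Psi$-part we add $|S|$ to the partial sums of $\Psi$. Associativity of $\mu^\pf$ is associativity of concatenation of lists.

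\emph{Step 2: the coproduct is well defined and coassociative.} First we check that $\park(\Phi_{\cap S})$ is a parking function on $S$. The output $\Phi'$ of Algorithm~\ref{alg:parkization} satisfies the prefix condition by construction, since a block is appended only if the new size is at least the new length. It remains to see that its length is exactly $|S|$. Here I would argue by induction along the blocks, using the Taking-subsets lemma and the parking condition on $\Phi$, that at any stage at which $\ell(\Phi')<|S|$ enough remaining mass still forces appended blocks. Empty blocks are appended exactly while the accumulated size exceeds the length, which is the slack needed to fill the length up to $|S|$. This is the content of the footnote that the algorithm ``succeeds'' on restrictions of parking functions. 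Coassociativity then reduces to the displayed identity (1), $\park(\park(\Phi_{\cap(S\sqcup T)})_{\cap S})=\park(\Phi_{\cap S})$, applied to both legs of \eqref{eq:coassociative}; the $R$- and $T$-components are handled symmetrically.

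To prove (1), observe that $\park(\Phi_{\cap(S\sqcup T)})$ is obtained from $\Phi_{\cap(S\sqcup T)}$ by deleting certain blocks. By the Taking-subsets lemma, the corresponding blocks of $\Phi_{\cap S}$ are also deleted when parkizing $\Phi_{\cap S}$. Hence deleting them first does not change the result. The one delicacy is that deleting blocks early changes the running lengths; I would control this by noting that the deleted blocks are exactly the ones encountered when size $<$ length $+1$, and that this status is monotone under removal.

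\emph{Step 3: compatibility.} Writing $A,B,C,D$ as in \eqref{eq:ABCD}, the diagram \eqref{eq:compatible-delta} evaluated on $\Phi\otimes\Psi$ requires exactly identity (2) for $S'$, together with its twin for $T'$ (with $B,D$ in place of $A,C$). To prove (2), parkize $(\Phi|\Psi)_{\cap S'}=\Phi_{\cap A}|\Psi_{\cap C}$. Running the algorithm on the first portion yields $\park(\Phi_{\cap A})$, which by Step 2 has size equal to its length $|A|$. The algorithm therefore enters the $\Psi$-portion in the same state (size equals length) as if starting fresh, up to a shift by $|A|$ that cancels in the test $|\Phi|+|\dot\Gamma_1|\geq\ell(\Phi)+1$. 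The discarded trailing empty blocks of $\Phi_{\cap A}$ are absorbed by the Adding-empty-sets lemma, which applies with $\Gamma^{(1)}=\park(\Phi_{\cap A})$ satisfying $\ell=|\cdot|$.

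\emph{Main obstacle.} The main obstacle is Step 2: proving that parkization of a restriction returns a list of length exactly $|S|$, and proving identity (1). Both need a careful invariant on the pair (size, length) during the algorithm. I would first try the invariant that after processing the first $j$ blocks of $\Phi_{\cap S}$, the output has length $\min\bigl(|S|,\ \text{accumulated size}\bigr)$, adjusted for whether the current block is nonempty, and prove it by induction on $j$.
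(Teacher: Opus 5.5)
Your overall route is the paper's: you reduce the bimonoid axioms to identities (1) and (2), and the antipode comes free from connectedness. Your arguments for (1) and (2) are sound. For (1), the ``delicacy'' you raise does not exist. A discarded block leaves the pair $(|\Phi|,\ell(\Phi))$ unchanged, and the test depends only on that pair and the current block. So removing blocks that would be discarded anyway changes nothing downstream.

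The genuine gap is the step you yourself flag: that $\park(\Phi_{\cap S})$ has length exactly $|S|$. Both the well-definedness of $\Delta^{\pf}$ and your proof of (2) depend on it, since (2) needs the state after the $\Phi_{\cap A}$-portion to be tight. The invariant you propose, length $=\min(|S|,\text{accumulated size})$, is false as stated. Take $\Phi=(ab,\emptyset)$ and $S=I=\{a,b\}$: after the first block the output $(ab)$ has length $1$, not $\min(|S|,2)=2$. The Taking-subsets lemma cannot supply this step either. It only propagates discards from a larger composition to a smaller one, so it bounds the number of discarded blocks from below; moreover $\park(\Phi)=\Phi$ discards nothing. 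What you need is an upper bound of $|T|=|I\setminus S|$ on the number of discards.

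Your observation that empty blocks are kept exactly while size exceeds length is the right starting point; turn it into a recursion.
\begin{itemize}
\item Write $\Phi=(A_1,\dots,A_n)$ and set $y_j=|S\cap(A_1\cup\cdots\cup A_j)|$. Let $x_j$ be the output length after $j$ blocks of $\Phi_{\cap S}$ have been processed.
\item Since $|\Phi'|\ge\ell(\Phi')$ at every stage, every nonempty block is kept, so $y_j$ is also the output size.
\item An empty block is discarded exactly when $x_{j-1}=y_{j-1}$.
\item Checking the three cases (nonempty block; empty block with $x_{j-1}<y_{j-1}$; empty block with $x_{j-1}=y_{j-1}$) gives $x_j=\min(x_{j-1}+1,\,y_j)$. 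Hence $x_n=\min_{0\le k\le n}(y_k+n-k)$.
\item The parking condition $|A_1\cup\cdots\cup A_k|\ge k$, together with $|T\cap(A_1\cup\cdots\cup A_k)|\le|T|$, gives $y_k\ge k-|T|$. Therefore $x_n\ge n-|T|=|S|$.
\item Also $x_n\le y_n=|S|$, so $x_n=|S|$.
\end{itemize}
The prefix condition holds by construction, so $\park(\Phi_{\cap S})$ is a parking function on $S$. The paper only asserts this in a footnote. With it in place, the rest of your proposal goes through.
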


\subsubsection{An interpolation of Hopf monoids}

We use Theorem \ref{th:axiomatic} to define an interpolating family $\{\rspec{\pf}\}_{r\geq1}$ between $\pf$ and $\bfPi$.

To begin, consider the morphism $\pi: \pf \to \bfPi$ defined on parking functions as follows: view $\Phi=(A_1,\ldots, A_n)$ as a weak set composition and send it to the corresponding set partition $\phi$ . (That is, $\phi = \{A_1, \ldots, A_n\}$, with empty sets dropped.) It is easy to check that $\pi$ respects product and coproduct, {\it i.e.}, is a bimonoid morphism. We indicate the relevant diagrams below but leave the work to the reader.
\[
\begin{tikzcd}
    \Phi \otimes \Psi \arrow[r, "\mu^{\pf}"]\arrow[dd, "\pi \otimes \pi"'] & \Phi|\Psi \arrow[d, "\pi"]\\[-.5ex] 
    & \substack{{\textstyle \pi(\Phi\vert\Psi)}\\[.75ex] =} \\[-5ex]
    \phi \otimes \psi \arrow[r, "\mu^{\bfPi}"'] & \phi \sqcup\psi
\end{tikzcd}
\quad\qand\quad
\begin{tikzcd}
    \Phi \arrow[r, "\Delta^{\pf}"] \arrow[dd, "\pi"'] & \park(\Phi_{\cap S})\otimes \park(\Phi_{\cap T}) \arrow[d, "\pi\otimes\pi"] \\[-.5ex] 
    & \substack{{\textstyle \pi(\Phi_{\cap S})\otimes \pi(\Phi_{\cap T})}\\[.75ex] =} \\[-5ex]
    \phi \arrow[r, "\Delta^{\bfPi}"'] & \phi\vert_{S} \otimes \phi\vert_{T}
\end{tikzcd}
\]

Define a comonoid filtration $\bfPi=\bigcup_r\downspec{\bfPi}$ as in the previous examples (using the canonical decomposition of $\bfE_+$). For $\downspec{\pf}$, we have two natural choices: (a) all blocks $A_i$ in some parking function $\Phi$ satisfy $|A_i|<r$; or (b) writing $\Phi$ in terms of block parking functions  $\Phi^{(1)}|\Phi^{(2)}|\cdots|\Phi^{(k)}$, each $\Phi^{(i)}\in\pf[S_i]$ satisfies $|S_i|<r$.
Each give a comonoid filtration
for $\pf$ that $\pi$ respects. We choose the latter in what follows, as it's easier to describe the complements $\upspec{\pf}$ from Axiom \ref{ax:comp}.

Since $\pf$ is a free monoid on $\mathbf{BPF}$, a basis for the ideal $\langle \downspec{\pf}^+\rangle$ consists of all lists 
$\Phi^{(1)}|\Phi^{(2)}|\cdots|\Phi^{(k)}$ of block parking functions where at least one set $S_i$ satisfies $|S_i|<r$. So 
\[
    \upspec{\pf} = \bfL\scirc\mathbf{BPF}_{\geq r}
    \qand 
    \rspec{\pf} = \upspec{\pf}\sdot\downspec{\bfPi}.
\]
A basis for the latter is indexed by pairs $(\Phi; \phi)$ with $\Phi = \Phi^{(1)}|\Phi^{(2)}|\cdots|\Phi^{(k)}$ being a list of $r$-large block parking functions and $\phi$ being a set of $r$-small subsets. (Concretely, we say $\Phi^{(i)}$ is $r$-large if $|S_i|\geq r$ and $r$-small otherwise. Likewise for blocks of set partitions.) For $r<s$, the carrying map is given by
\begin{align*}
    \port_s^r:\rspec{\pf} &\onto \rspec[s]{\pf} \\
    (\Phi;\phi) &\mapsto \bigl(\Phi^{\lfloor s\rfloor}; \pi(\Phi^{\lceil s\rceil})\cup \phi\bigr),
\end{align*}
where $\Phi^{\lfloor s\rfloor}$ and $\Phi^{\lceil s\rceil}$ are the subsequences of $\Phi$ consisting of $s$-large and $s$-small block parking functions, respectively.

\subsubsection{Hopf algebras of parking functions}

The map $\pi:\pf\onto\bfPi$ gives us:
\begin{center}
\begin{tikzcd}
    \K(\pf) = \pfsym \arrow[d, two heads] & \Kbar(\pf) = \cqsym^*  \arrow[d, two heads] & \Kvee(\pf) = \pqsym \arrow[d, two heads]  \\
    \K(\bfPi) = \ncsym & \Kbar(\bfPi) = \sym  & \Kvee(\bfPi) = \ncsym^*
\end{tikzcd}
\end{center}
and dually, $\ncsym^*\hookrightarrow\pfsym^*$, $\sym\hookrightarrow\cqsym$, and $\ncsym\hookrightarrow\pqsym^*$. So each of the six Hopf pairs has an interpolating family provided by Theorem \ref{th:abelianization}.

\begin{remark}
    The Hopf algebra of parking functions $\pqsym$ comes from \cite{novelli2007hopf}. The equality $\Kvee(\pf)=\pqsym$ is explained below, as it well-illustrates the $\Kvee$ construction from \cite{aguiar2010monoidal}. The cocommutative Hopf algebra of parking functions denoted $\pfsym$ is explored in \cite{li2015monomial}; it is the dual of a Hopf algebra first introduced in \cite{hivert2008commutative}. The equality $\K(\pf)=\pfsym$ is not immediately apparent and will be discussed in the build up to Proposition \ref{th:PFSym}. Finally $\cqsym$ denotes the commutative noncocommutative Hopf algebra of Dyck paths found in \cite{hivert2008commutative}. 
    The equality $\Kbar(\pf)=\cqsym^*$ is checked as in Proposition \ref{th:PFSym} so will be omitted. 
\end{remark}

Let $\Phi\in \Kvee(\pf)$ be a parking function in the $n^\text{th}$ graded component of $\Kvee(\pf)$. Then the coproduct in $\Kvee(\pf)$ is the sum of coproducts (in $\pf$) that run over the sets $[s]$ and $[s+1,s+t]$ where $n=s+t$ followed by standardization. In the language of words, this is precisely the parkization process describing the coproduct in the fundamental basis $F$ of \cite{novelli2007hopf}. Next, consider two parking functions $\Phi$ and $\Psi$ in the $s^\text{th}$ and $t^\text{th}$ graded components of $\Kvee(\pf)$, respectively. The product $\Phi\cdot\Psi$ in $\Kvee(\pf)$ is the sum of all products $\Phi^S\cdot \Psi^T$ (in $\pf$), where: $|S|=s$, $|T|=t$, $S \sqcup T = [s+t]$, $\mathsf{cano}:[s]\to S$ is the unique order-preserving bijection, and $\Phi^S$ is the image of $\Phi$ under the corresponding isomorphism $\pf[s] \xrightarrow{\pf[\mathsf{cano}]} \pf[S]$ (and likewise for $\Psi^T$). Noting that the product (in $\pf$) is concatenation and reformulating to the language of words, the sum yields a shifted shuffle product---precisely matching the $F$-basis product in \cite{novelli2007hopf}.

\begin{proposition}\label{th:PQSym}
    The Hopf algebras $\Kvee(\pf)$ and $\pqsym$ are isomorphic.
\end{proposition}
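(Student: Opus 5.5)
The plan is to write down an explicit linear isomorphism $\Kvee(\pf)\to\pqsym$ on graded components and then check that it carries products to products and coproducts to coproducts.

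\emph{The map.} The degree-$n$ component of $\Kvee(\pf)$ is $\pf[n]$, with basis the parking functions $\Phi=(A_1,\ldots,A_n)$ on $[n]$. Send such a $\Phi$ to the word $w_\Phi=w_1w_2\cdots w_n$ defined by $w_j=i$ if and only if $j\in A_i$. The condition $\sum_{i\le k}|A_i|\ge k$ for all $k$ is exactly the statement that $w_\Phi$ is a parking function in the word sense of \cite{novelli2007hopf}. So $\Phi\mapsto F_{w_\Phi}$ is a bijection from this basis onto the fundamental basis of $\pqsym_n$, and it extends linearly to a graded linear isomorphism. It remains to show the map is multiplicative and comultiplicative.

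\emph{Products.} Recall the $\Kvee$ product described just before the statement: $\Phi\cdot\Psi=\sum_{S\sqcup T=[s+t]}\Phi^S|\Psi^T$. Relabeling $\Phi$ along $\mathsf{cano}:[s]\to S$ and $\Psi$ along $[t]\to T$, then concatenating, gives the weak composition whose letter at position $j$ is the $\Phi$-value of $\mathsf{cano}^{-1}(j)$ if $j\in S$. If instead $j\in T$, that letter is the $\Psi$-value plus $s$, because concatenation shifts the block indices of $\Psi$ by $\ell(\Phi)=s$. Summing over all subsets $S$ therefore yields exactly the shifted shuffle $w_\Phi\,\Cup\, w_\Psi[s]$, which is the $F$-basis product of \cite{novelli2007hopf}.

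\emph{Coproducts.} The $\Kvee$ coproduct is $\sum_{s+t=n}$ of $\Delta^{\pf}_{[s],[s+1,s+t]}$ followed by standardizing the second factor. Restricting $\Phi$ to the positions $[s]$ gives $\Phi_{\cap[s]}$, whose word is the prefix $w_1\cdots w_s$. The remaining step is to prove that $\park$ as in Algorithm \ref{alg:parkization} agrees with the parkization of words in \cite{novelli2007hopf} applied to that prefix (and likewise for the suffix). This is the main obstacle, since the two notions are defined differently. On the word side, parkization proceeds by repeatedly decrementing letters whenever the parking condition fails. On the set side, the algorithm deletes an empty block whenever keeping it would violate the condition. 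Deleting empty block $i$ is the same as decrementing every letter greater than $i$. I would argue by induction on the number of deleted blocks, using Lemma (Adding empty sets) to show that the order and timing of deletions does not affect the result. Lemma (Taking subsets) is needed to confirm that only empty blocks are ever deleted when the input is a restriction of a genuine parking function. Once this is established, the coproduct in $\Kvee(\pf)$ matches $\Delta F_w=\sum F_{\park(u)}\otimes F_{\park(v)}$ over the factorizations $w=uv$. Since the map is a bijection compatible with product and coproduct (and trivially with unit and counit), it is an isomorphism of Hopf algebras.
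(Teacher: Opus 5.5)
This is essentially the paper's argument. You identify parking functions on $[n]$ with parking words, match the $\Kvee$ product (relabel along $S\sqcup T=[s+t]$, then concatenate) with the shifted shuffle, and match the $\Kvee$ coproduct (cut at $[s]\sqcup[s+1,n]$, apply $\park$, standardize) with the $F$-basis coproduct of Novelli--Thibon. The paper takes for granted that Algorithm~\ref{alg:parkization} is Novelli--Thibon parkization in set language, so your flagged verification is extra care rather than a different route. One small correction: the algorithm maintains the invariant $|\Phi|\geq\ell(\Phi)$, which already shows that nonempty blocks are never discarded for any input, so the Taking-subsets lemma is not what you need for that step.
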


Consider now $\K(\pf)$ and a parking function $\Phi$ in the $n^\text{th}$ graded component of $\K(\pf)$. Following the $\K$ construction in \cite[Sec. 15.2.1]{aguiar2010monoidal}, the coproduct in $\K(\pf)$ is the sum of coproducts (in $\pf$) that run over the sets $S$ and $T$ such that $S\sqcup T = [n]$ followed by standardization. This means that every term will look like $\park(\Phi|_S) \otimes \park(\Phi|_T)$ followed by standardization. This also implies that the term $\park(\Phi|_T) \otimes \park(\Phi|_S)$ is also in the sum, which implies that $\K(\pf)$ is cocommutative. The product in $\K(\pf)$ is shift concatenation, which is free. Thus $\K(\pf)$ is a free, graded, connected, cocommutative Hopf algebra whose dimension sequence is given by the number of parking functions. This same sentence applies to $\pfsym$. It so happens that dimension sequences completely determine isomorphism type in this context. (See \cite[Theorem 12]{aliniaeifard2022hopf} and \cite{andrews2025hopfalgebrasdeterminedinteger} for recent work in this direction.) 

\begin{proposition}\label{th:PFSym}
    The Hopf algebras $\K(\pf)$ and $\pfsym$ are isomorphic.
\end{proposition}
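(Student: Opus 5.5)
The plan is to invoke the rigidity result cited just before the statement, \cite[Theorem 12]{aliniaeifard2022hopf}. Over a field of characteristic zero, a graded connected cocommutative Hopf algebra that is free as an algebra is determined up to isomorphism by its Hilbert series. By Cartier--Milnor--Moore it is $U(\mathfrak g)$ with $\mathfrak g$ its primitive Lie algebra. Freeness forces $\mathfrak g$ to be a free graded Lie algebra on some graded generating space $V$, and then $H\cong T(V)$ as an algebra, so $\dim V_n$ is read off from the Hilbert series. So it suffices to verify, for both $\K(\pf)$ and $\pfsym$, four properties: graded connected, cocommutative, free as an algebra, and graded dimensions equal to the number of parking functions of $[n]$, namely $(n+1)^{n-1}$.

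For $\K(\pf)$, first note that $\pf$ is a connected Hopf monoid by Theorem~\ref{th:PF-is-hopf}, so $\K(\pf)$ is a graded connected Hopf algebra with $\dim\K(\pf)_n=\dim\pf[n]$. The latter is the number of parking functions on an $n$-set. Cocommutativity comes from the formula $\Delta=\sum_{S\sqcup T=[n]}\mathrm{std}\bigl(\Delta^{\pf}_{S,T}\bigr)$ from \cite[\S15.2.1]{aguiar2010monoidal}. The sum runs over all ordered decompositions, so the swap $(S,T)\mapsto(T,S)$ pairs the term $\park(\Phi_{\cap S})\otimes\park(\Phi_{\cap T})$ with its flip. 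Standardization commutes with this flip, so $\tw\circ\Delta=\Delta$.

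Freeness is the one point that needs real care. The product in $\K(\pf)$ is shifted concatenation $\Phi\otimes\Psi\mapsto\Phi|\Psi^{+s}$. I would use the monoid factorization $\pf=\bfL\scirc\mathbf{BPF}$: every parking function on $[n]$ factors uniquely as a shifted concatenation of standardized block parking functions. Hence the parking-function basis of $\K(\pf)$ is exactly the set of words in the standardized block parking functions, and $\K(\pf)=T\bigl(\K(\mathbf{BPF})\bigr)$ freely. In other words, $\K$ sends the free monoid $\bfL\scirc\mathbf{BPF}$ to the tensor algebra on $\K(\mathbf{BPF})$. The main obstacle is to justify uniqueness of the block decomposition. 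Here I would check that breakpoints are exactly the indices $k$ with $\sum_{i\le k}|A_i|=k$, and that these are intrinsic to $\Phi$.

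For $\pfsym$, I would take from \cite{li2015monomial} (or the dual of \cite{hivert2008commutative}) that it is graded connected, cocommutative and free, with basis indexed by parking functions of length $n$. Equality of Hilbert series then gives equality of $\dim V_n$ for the two algebras. Choosing bases of primitive generators degree by degree and mapping one onto the other extends to an algebra map of free algebras. This map is a Hopf isomorphism because both are $U$ of the free Lie algebra on primitives, which is exactly the content of \cite[Theorem 12]{aliniaeifard2022hopf}.
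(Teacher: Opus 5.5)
Your overall route is the paper's: show that $\K(\pf)$ and $\pfsym$ are graded connected, cocommutative and free, with graded dimension $(n+1)^{n-1}$, then invoke the rigidity result \cite[Theorem 12]{aliniaeifard2022hopf}. Your cocommutativity argument and your unpacking of rigidity via Cartier--Milnor--Moore are fine. The gap is in your justification of freeness, which would fail as written.

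The factorization $\pf=\bfL\scirc\mathbf{BPF}$ is a statement about species. In it, a parking function on $[n]$ is a concatenation of block parking functions whose label sets form an \emph{arbitrary} set composition of $[n]$. The product in $\K(\pf)$, however, is shifted concatenation, which only produces factorizations along an interval decomposition $[1,s]\sqcup[s+1,n]$. For example, $(\{2\},\{1\})\in\pf[2]$ equals $(\{2\})|(\{1\})$ in $\pf$. It is not a product in $\K(\pf)$, because the only product of degree-one basis elements is $(\{1\})\cdot(\{1\})=(\{1\},\{2\})$.

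So the parking-function basis is not the set of words in standardized block parking functions, and $\K(\pf)\neq T(\K(\mathbf{BPF}))$. Already in degree $2$ the latter has dimension $b_1^2+b_2=2$ rather than $3$. The relation $\pf(x)=1/(1-\mathbf{BPF}(x))$ holds for exponential, not ordinary, generating series. Put differently, $\K$ does not turn Cauchy products into tensor products; the identification you want is valid for $\Kbar$, not for $\K$. For the same reason, the indices $k$ with $\sum_{i\le k}|A_i|=k$ are cut points for the species-level factorization, not for the algebra product. Your ``main obstacle'' is therefore aimed at the wrong decomposition.

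Freeness does survive once you use the right cut points. For $\Phi=(A_1,\ldots,A_n)\in\pf[n]$, let $C(\Phi)$ be the set of $0<k<n$ with $A_1\cup\cdots\cup A_k=[k]$.
\begin{itemize}
\item We have $\Phi=\Phi'\cdot\Psi'$ with $\Phi'\in\pf[k]$ exactly when $k\in C(\Phi)$. The condition forces $\sum_{i\le k}|A_i|=k$, so both pieces satisfy the parking condition after standardization.
\item Moreover $C(\Phi'\cdot\Psi')=C(\Phi')\cup\{k\}\cup(k+C(\Psi'))$.
\end{itemize}
Hence cutting $\Phi$ at every element of $C(\Phi)$ gives its unique factorization into elements with empty cut set. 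So parking functions form a free monoid under shifted concatenation, and $\K(\pf)$ is the free associative algebra on those $\Phi$ with $C(\Phi)=\emptyset$. This generating set is strictly larger than the set of standardized block parking functions; for instance it contains $(\{2\},\{1\})$.

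With this replacement the rest of your argument goes through. The paper itself only asserts that shifted concatenation is free, without naming generators.
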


\subsection{\texorpdfstring{$r$}{r}-Rooted Forests}
\label{sec:r-rooted-forests}

Our final example is centered on Hopf algebras of rooted forests, including the Connes--Kreimer Hopf algebra $\mathcal{H}_R$ and a noncommutative variant $\mathcal{H}_{PR}$. Species variants appear in \cite{aguiar2010monoidal}. We summarize the details below. A more complete history of development can be found in \cite[\S17.5.4]{aguiar2010monoidal}.

\subsubsection{An interpolation of Hopf monoids}

Writing $\vec{\a}$ (respectively, ${\a}$) for the positive species of planar rooted trees (rooted trees), put $\prf=\bfL\scirc\vec{\a}$ and $\rf=\bfE\scirc\a$. These are given bimonoid structures in \cite{aguiar2010monoidal} that share the monoid structures of $\cfT(\vec{\a})$ and $\cfS(\a)$, respectively; but the comonoids structures are more subtle. (Specifically, they do not come from comonoid structures on $\vec{\a}$ and $\a$.)

We describe the coproduct on $\prf$ briefly and leave $\rf$ for the reader. Note that $\prf$ is a linearized species, and consider some distinguished basis element (list of planar rooted trees) $\vec f \in \prf[I]$. A subset $S\subseteq I$ is \demph{$f$-admissible} if whenever $a\in S$, every ancestor of $a$ is also in $S$. 
Given $I = S\sqcup T$, we have 
\[
\Delta_{S,T}^{\prf} (\vec f) = 
\begin{cases}
    \vec f|_S\otimes \vec f|_T & \text{if }S\text{ is }\vec f\text{-admissible}\\
    0 & \text{else,}
\end{cases}
\]
where $\vec f\vert_U$ indicates the planar rooted forest on $U\subseteq I$ induced by $\vec f$. Forgetting planar embeddings of trees gives the Hopf monoid morphism $\pi:\prf \onto \rf$.  

Build comonoid filtrations on $\prf$ and $\rf$ by putting $\downspec{\prf}:=\bfL\scirc\vec{\a}_{<r}$ and $\downspec{\rf}:=\bfE\scirc{\a}_{<r}$. Clearly $\pi(\downspec{\prf})\subseteq \downspec{\rf}$, so we may apply Theorem \ref{th:axiomatic} to build an interpolating family $\rspec{\prf}$. The first step is to understand $\upspec{\prf}$ from Axiom \ref{ax:comp}. Since $\prf$ is free as a monoid (generated by planar rooted trees), we see that the ideal $\langle \downspec{\prf}^+\rangle$ is spanned by lists of planar rooted trees where at least one tree has fewer than $r$ nodes. Conclude that $\upspec{\prf}$ is spanned by planar rooted forests $\vec f$ where every tree $a\in \vec f$ has at least $r$ nodes ({\it i.e.}, $\upspec{\prf} = \bfL\scirc\vec{\a}_{\geq r}$).

For $r<s$, the carrying map is given by
\begin{align*}
    \port_s^r:\rspec{\prf}[I]&\onto\rspec[s]{\prf}[I] \\
    (\vec{f};f) &\mapsto \bigl(\vec{f}^{\lfloor s\rfloor}; \pi(\vec{f}^{\lceil s\rceil})\cup f\bigr),
\end{align*}
with notation as in \eqref{eq:TS-port-alternate}.

\subsubsection{Hopf algebras on forests}

As described in \cite[\S17.5.4]{aguiar2010monoidal}, applying $\Kbar$ to the map $\pi:\pf\onto\bfPi$ gives us maps between $\mathcal{H}_{PR}$ and $\mathcal{H}_{R}$. The complete picture is as follows:
\begin{center}
\begin{tikzcd}
    \K(\prf)= \boxed{?} \arrow[d, two heads] & \Kbar(\prf)=\mathcal{H}_{PR} \arrow[d, two heads] & \Kvee(\prf) = \boxed{?} \arrow[d, two heads]  \\
    \K(\rf) = \mathcal H_o & \Kbar(\rf)=\mathcal{H}_{R}  & \Kvee(\rf) = \boxed{?}
\end{tikzcd}
\end{center}
and dually, {\it mutatis mutandis}. Each of the six Hopf pairs has a (new) interpolating family provided by Theorem \ref{th:abelianization}.

\begin{remark}
The Hopf algebra $\mathcal{H}_o$ of {ordered forests} is studied by Foissy and Unterberger \cite{foissy2002les,foissy2013ordered1}. 
The commutative Hopf algebra $\mathcal{H}_{R}$ of rooted forests is due to Grossman and Larson \cite{grossman1989hopf}, but was popularized Connes and Kreimer through their work on renormalization in quantum field theory \cite{connes1998hopf}. 
The noncommutative analog $\mathcal{H}_{PR}$, on planar rooted forests, was introduced independently by Foissy \cite{foissy2002les} and Holtkamp \cite{holtkamp2003comparison}. 
We were unable to find $\K(\prf)$, $\Kvee(\prf)$, and $\Kvee(\rf)$ in the literature.%
\footnote{Though $\K(\prf)$ is so close to $\mathcal{H}^{\mathcal D}_{PR}$ and $\mathcal{H}_o$ studied by Foissy and collaborators that we could attribute it to him.}
\end{remark}

A complete description of the structures of the missing Hopf algebras follows from straightforward computations ({\it cf.} \cite{aguiar2010monoidal}, Example 15.17). One detail  of note is that $\K(\rf) = \mathcal H_o$ is isomorphic neither to $\Kvee(\rf)$ nor $\Kvee(\rf)^*$. Indeed, $\mathcal{H}_o$ is noncommutative and noncocommutative, while $\Kvee(\rf)$ is commutative.

\section{Further Remarks on Interpolation of Species}
\label{sec:further-questions}

We highlight some avenues for future research.

\subsection*{Alternate interpolations}

In our framework, we stitch together two filtrations via the Cauchy product. What can be said for the other natural operations on species, {\it e.g.}, $\upspec{\b}\splus\downspec{\d}$, $\upspec{\b}\stimes\downspec{\d}$, and $\upspec{\b}\scirc\downspec{\d}$? The recoil classes of \cite{novelli2011descent} fit into this story (with species operation ``$\splus$''), but a general framework for each of these alternate schemes remains elusive. 

In a different direction, we have not considered here the cases of colored and decorated species \cite[Chs. 14, 19]{aguiar2010monoidal}. Nor have we spoken about interpolating between Fock functors. In \cite{foissy2013ordered1}, the authors establish a map $\mathcal{H}_o \rightarrow \mathcal{H}_R$ that comes, in hindsight, from the natural transformation $\K \stackrel{\nu}{\Rightarrow} \Kbar$. (See \cite[\S15.1]{aguiar2010monoidal} for details on $\nu$.) Is there a sensible notion of $\rspec[s]{\!\K}$ interpolating between $\K$ and $\Kbar$ (equivalently, between $\Kvee$ and $\Kbv$)?

\subsection*{Species extensions}

We have focused our attention on the case $\b \onto \rc \onto \d$ or dually, $\b \into \rc \into \rd$. Depending on the context, one may allow for more general factorizations. The short exact sequence $\b \into \rc \onto \d$ springs to mind and suggests the question: \emph{given two Hopf monoids in species $\b,\d$, can we classify the extensions of $\d$ by $\b$?} In the theory of finite dimensional Hopf algebras, all such extensions are \emph{cleft}\footnote{Translating to our context, a cleft extension is a deformation of the usual bimonoid structure on the Cauchy product $\b\sdot\d$ by certain (co)actions $\lambda,\rho$ between $\b$ and $\d$.} and there is a suitable cohomology theory that can be worked out in specific examples \cite{Andruskiewitsch1995extensions}. The Hopf algebras arising in combinatorics (while not finite) hew closely to the finite case, so we ask for a cohomology theory in $\bimon(\Spec,\sdot)$. 

\printbibliography

@book {aguiar2010monoidal,
    AUTHOR = {Aguiar, Marcelo and Mahajan, Swapneel},
     TITLE = {Monoidal functors, species and {H}opf algebras},
    SERIES = {CRM Monograph Series},
    VOLUME = {29},
      NOTE = {With forewords by Kenneth Brown and Stephen Chase and Andr\'{e}
              Joyal},
 PUBLISHER = {American Mathematical Society, Providence, RI},
      YEAR = {2010},
     PAGES = {lii+784},
      ISBN = {978-0-8218-4776-3},
       DOI = {10.1090/crmm/029},
}

@article{aliniaeifard2024generalized,
    AUTHOR = {Aliniaeifard, Farid and Li, Shu Xiao and van Willigenburg,
              Stephanie},
     TITLE = {Generalized chromatic functions},
   JOURNAL = {Int. Math. Res. Not. IMRN},
  FJOURNAL = {International Mathematics Research Notices. IMRN},
      YEAR = {2024},
    NUMBER = {5},
     PAGES = {4456--4500},
      ISSN = {1073-7928,1687-0247},
       DOI = {10.1093/imrn/rnad149},
}

@article {aliniaeifard2022hopf,
    AUTHOR = {Aliniaeifard, Farid and Thiem, Nathaniel},
     TITLE = {Hopf structures in the representation theory of direct
              products},
   JOURNAL = {Electron. J. Combin.},
  FJOURNAL = {Electronic Journal of Combinatorics},
    VOLUME = {29},
      YEAR = {2022},
    NUMBER = {4},
     PAGES = {Paper No. 4.39, 33},
      ISSN = {1077-8926},
       DOI = {10.37236/11259},
}

@misc{andrews2025hopfalgebrasdeterminedinteger,
      author={Nicolas Andrews and Lucas Gagnon and F\'elix G\'elinas and Eric Schlums and Mike Zabrocki},
      title={When are {H}opf algebras determined by integer sequences?}, 
      year={2025},
      eprint={2505.06941},
      archivePrefix={arXiv},
      primaryClass={math.CO},
}

@article {Andruskiewitsch1995extensions,
    AUTHOR = {Andruskiewitsch, Nicol\'{a}s and Devoto, Jorge},
     TITLE = {Extensions of {H}opf algebras},
   JOURNAL = {Algebra i Analiz},
  FJOURNAL = {Rossi\u{\i}skaya Akademiya Nauk. Algebra i Analiz},
    VOLUME = {7},
      YEAR = {1995},
    NUMBER = {1},
     PAGES = {22--61},
      ISSN = {0234-0852},
}

@book {bergeron1998combinatorial,
    AUTHOR = {Bergeron, F. and Labelle, G. and Leroux, P.},
     TITLE = {Combinatorial species and tree-like structures},
    SERIES = {Encyclopedia of Mathematics and its Applications},
    VOLUME = {67},
      NOTE = {Translated from the 1994 French original by Margaret Readdy,
              With a foreword by Gian-Carlo Rota},
 PUBLISHER = {Cambridge University Press, Cambridge},
      YEAR = {1998},
     PAGES = {xx+457},
      ISBN = {0-521-57323-8},
}

@unpublished{bergeron2008introduction,
  title={Introduction to the Theory of Species of Structures},
  author={Bergeron, Fran{\c{c}}ois and Labelle, Gilbert and Leroux, Pierre},
  year={2013},
  note={Rewriting of the first chapters of `Combinatorial species\dots' \url{https://bergeron.math.uqam.ca/publications/}},
}

@article{bergeron2009hopf,
    AUTHOR = {Bergeron, Nantel and Zabrocki, Mike},
     TITLE = {The {H}opf algebras of symmetric functions and quasi-symmetric
              functions in non-commutative variables are free and co-free},
   JOURNAL = {J. Algebra Appl.},
  FJOURNAL = {Journal of Algebra and its Applications},
    VOLUME = {8},
      YEAR = {2009},
    NUMBER = {4},
     PAGES = {581--600},
      ISSN = {0219-4988,1793-6829},
       DOI = {10.1142/S0219498809003485},
}

@article {connes1998hopf,
    AUTHOR = {Connes, Alain and Kreimer, Dirk},
     TITLE = {Hopf algebras, renormalization and noncommutative geometry},
   JOURNAL = {Comm. Math. Phys.},
  FJOURNAL = {Communications in Mathematical Physics},
    VOLUME = {199},
      YEAR = {1998},
    NUMBER = {1},
     PAGES = {203--242},
      ISSN = {0010-3616},
       DOI = {10.1007/s002200050499},
}

@article {foissy2002les,
    AUTHOR = {Foissy, Lo\"ic},
     TITLE = {Les alg\`ebres de {H}opf des arbres enracin\'{e}s d\'{e}cor\'{e}s. {II}},
   JOURNAL = {Bull. Sci. Math.},
  FJOURNAL = {Bulletin des Sciences Math\'{e}matiques},
    VOLUME = {126},
      YEAR = {2002},
    NUMBER = {4},
     PAGES = {249--288},
      ISSN = {0007-4497},
       DOI = {10.1016/S0007-4497(02)01113-2},
}

@article {foissy2007bidendriform,
    AUTHOR = {Foissy, Lo\"ic},
     TITLE = {Bidendriform bialgebras, trees, and free quasi-symmetric
              functions},
   JOURNAL = {J. Pure Appl. Algebra},
  FJOURNAL = {Journal of Pure and Applied Algebra},
    VOLUME = {209},
      YEAR = {2007},
    NUMBER = {2},
     PAGES = {439--459},
      ISSN = {0022-4049},
       DOI = {10.1016/j.jpaa.2006.06.005},
}

@article {foissy2013ordered1,
    AUTHOR = {Foissy, Lo\"ic and Unterberger, J\'er\'emie},
     TITLE = {Ordered forests, permutations, and iterated integrals},
   JOURNAL = {Int. Math. Res. Not. IMRN},
  FJOURNAL = {International Mathematics Research Notices. IMRN},
      YEAR = {2013},
    NUMBER = {4},
     PAGES = {846--885},
      ISSN = {1073-7928,1687-0247},
       DOI = {10.1093/imrn/rnr273},
}

@article{garsia2007rqsym,
    AUTHOR = {Garsia, Adriano M. and Wallach, Nolan},
     TITLE = {{$r$}-{Q}sym is free over {S}ym},
   JOURNAL = {J. Combin. Theory Ser. A},
  FJOURNAL = {Journal of Combinatorial Theory. Series A},
    VOLUME = {114},
      YEAR = {2007},
    NUMBER = {4},
     PAGES = {704--732},
      ISSN = {0097-3165,1096-0899},
       DOI = {10.1016/j.jcta.2006.08.009},
}

@article {grossman1989hopf,
    AUTHOR = {Grossman, Robert and Larson, Richard G.},
     TITLE = {Hopf-algebraic structure of families of trees},
   JOURNAL = {J. Algebra},
  FJOURNAL = {Journal of Algebra},
    VOLUME = {126},
      YEAR = {1989},
    NUMBER = {1},
     PAGES = {184--210},
      ISSN = {0021-8693},
}

@unpublished{hivert2004local,
  title={Local action of the symmetric group and generalizations of quasi-symmetric functions},
  author={Hivert, Florent},
  month={December 4},
  year={2005},
  note={preprint: \url{http://igm.univ-mlv.fr/~hivert/paper.html}},
}

@article {hivert2008commutative,
    AUTHOR = {Hivert, Florent and Novelli, Jean-Christophe and Thibon,
              Jean-Yves},
     TITLE = {Commutative combinatorial {H}opf algebras},
   JOURNAL = {J. Algebraic Combin.},
  FJOURNAL = {Journal of Algebraic Combinatorics. An International Journal},
    VOLUME = {28},
      YEAR = {2008},
    NUMBER = {1},
     PAGES = {65--95},
      ISSN = {0925-9899},
       DOI = {10.1007/s10801-007-0077-0},
}

@article {holtkamp2003comparison,
    AUTHOR = {Holtkamp, Ralf},
     TITLE = {Comparison of {H}opf algebras on trees},
   JOURNAL = {Arch. Math. (Basel)},
  FJOURNAL = {Archiv der Mathematik},
    VOLUME = {80},
      YEAR = {2003},
    NUMBER = {4},
     PAGES = {368--383},
      ISSN = {0003-889X,1420-8938},
       DOI = {10.1007/s00013-003-0796-y},
}

@article{joni1979coalgebras,
    AUTHOR = {Joni, Saj-nicole A. and Rota, Gian-Carlo},
     TITLE = {Coalgebras and bialgebras in combinatorics},
   JOURNAL = {Stud. Appl. Math.},
  FJOURNAL = {Studies in Applied Mathematics},
    VOLUME = {61},
      YEAR = {1979},
    NUMBER = {2},
     PAGES = {93--139},
      ISSN = {0022-2526},
       DOI = {10.1002/sapm197961293},
}

@article{joyal1981species,
    AUTHOR = {Joyal, Andr\'{e}},
     TITLE = {Une th\'{e}orie combinatoire des s\'{e}ries formelles},
   JOURNAL = {Adv. in Math.},
  FJOURNAL = {Advances in Mathematics},
    VOLUME = {42},
      YEAR = {1981},
    NUMBER = {1},
     PAGES = {1--82},
      ISSN = {0001-8708},
       DOI = {10.1016/0001-8708(81)90052-9},
}

@article{lazzeroni2023powersum,
    AUTHOR = {Lazzeroni, Anthony},
     TITLE = {Powersum bases in quasisymmetric functions and quasisymmetric
              functions in non-commuting variables},
   JOURNAL = {Electron. J. Combin.},
  FJOURNAL = {Electronic Journal of Combinatorics},
    VOLUME = {30},
      YEAR = {2023},
    NUMBER = {4},
     PAGES = {Paper No. 4.43, 36},
      ISSN = {1077-8926},
       DOI = {10.37236/11724},
}

@article {li2015monomial,
    AUTHOR = {Li, Teresa Xueshan},
     TITLE = {The monomial basis and the {$Q$}-basis of the {H}opf algebra
              of parking functions},
   JOURNAL = {J. Algebraic Combin.},
  FJOURNAL = {Journal of Algebraic Combinatorics. An International Journal},
    VOLUME = {42},
      YEAR = {2015},
    NUMBER = {2},
     PAGES = {473--496},
      ISSN = {0925-9899},
       DOI = {10.1007/s10801-015-0587-0},
}

@article {marberg2015linearization,
    AUTHOR = {Marberg, Eric},
     TITLE = {Strong forms of linearization for {H}opf monoids in species},
   JOURNAL = {J. Algebraic Combin.},
  FJOURNAL = {Journal of Algebraic Combinatorics. An International Journal},
    VOLUME = {42},
      YEAR = {2015},
    NUMBER = {2},
     PAGES = {391--428},
      ISSN = {0925-9899},
       DOI = {10.1007/s10801-015-0585-2},
}

@article {novelli2007hopf,
    AUTHOR = {Novelli, Jean-Christophe and Thibon, Jean-Yves},
     TITLE = {Hopf algebras and dendriform structures arising from parking
              functions},
   JOURNAL = {Fund. Math.},
  FJOURNAL = {Fundamenta Mathematicae},
    VOLUME = {193},
      YEAR = {2007},
    NUMBER = {3},
     PAGES = {189--241},
      ISSN = {0016-2736,1730-6329},
       DOI = {10.4064/fm193-3-1},
}

@incollection{novelli2006polynomial,
    AUTHOR = {Novelli, Jean-Christophe and Thibon,
              Jean-Yves},
     TITLE = {Polynomial realizations of some trialgebras},
 BOOKTITLE = {18th {I}nternational {C}onference on {F}ormal {P}ower {S}eries
              and {A}lgebraic {C}ombinatorics ({FPSAC} 2006)},
     PAGES = {243--254},
      YEAR = {2006},
}

@article{novelli2011descent,
    AUTHOR = {Novelli, Jean-Christophe and Reutenauer, Christophe and
              Thibon, Jean-Yves},
     TITLE = {Generalized descent patterns in permutations and associated
              {H}opf algebras},
   JOURNAL = {European J. Combin.},
  FJOURNAL = {European Journal of Combinatorics},
    VOLUME = {32},
      YEAR = {2011},
    NUMBER = {4},
     PAGES = {618--627},
      ISSN = {0195-6698},
       DOI = {10.1016/j.ejc.2011.01.004},
}

@article {patras2008trees,
    AUTHOR = {Patras, Fr\'ed\'eric and Schocker, Manfred},
     TITLE = {Trees, set compositions and the twisted descent algebra},
   JOURNAL = {J. Algebraic Combin.},
  FJOURNAL = {Journal of Algebraic Combinatorics. An International Journal},
    VOLUME = {28},
      YEAR = {2008},
    NUMBER = {1},
     PAGES = {3--23},
      ISSN = {0925-9899,1572-9192},
       DOI = {10.1007/s10801-006-0028-1},
}

@inproceedings {priez2015noncommutative,
    AUTHOR = {Priez, Jean-Baptiste and Virmaux, Aladin},
     TITLE = {Non-commutative {F}robenius characteristic of generalized
              parking functions: application to enumeration},
 BOOKTITLE = {Proceedings of {FPSAC} 2015},
    SERIES = {Discrete Math. Theor. Comput. Sci. Proc.},
     PAGES = {733--744},
 PUBLISHER = {Assoc. Discrete Math. Theor. Comput. Sci., Nancy},
      YEAR = {2015},
}

@article{schmitt1993hopf,
    AUTHOR = {Schmitt, William R.},
     TITLE = {Hopf algebras of combinatorial structures},
   JOURNAL = {Canad. J. Math.},
  FJOURNAL = {Canadian Journal of Mathematics. Journal Canadien de
              Math\'ematiques},
    VOLUME = {45},
      YEAR = {1993},
    NUMBER = {2},
     PAGES = {412--428},
      ISSN = {0008-414X,1496-4279},
       DOI = {10.4153/CJM-1993-021-5},
}

\end{document}